\newtheorem{theorem}{Theorem}[section]
\newtheorem{proposition}[theorem]{Proposition}
\newtheorem{lemma}[theorem]{Lemma}
\newtheorem{corollary}[theorem]{Corollary}
\newtheorem{fact}[theorem]{Fact}
\newtheorem{claim}[theorem]{Claim}
\newtheorem{observation}[theorem]{Observation}
\newtheorem{problem}[theorem]{Problem}
\theoremstyle{definition}
\newtheorem{definition}[theorem]{Definition}
\newcommand{\add}{\operatorname{add}}
\newcommand{\pcf}{\operatorname{pcf}}
\newcommand{\nad}{\operatorname{{ADD}}}
 \newcommand{\Bd}{\operatorname{Bd}}
 \newcommand{\Ke}{\operatorname{In}}
\def\<{\left\langle}
\def\>{\right\rangle}
\def\cf{\operatorname{cf}}
\newcommand{\restr}{\restriction}
\newcommand{\Int}{{2^\omega}}
\newcommand{\setm}{\setminus}
\newcommand{\empt}{\emptyset}
\newcommand{\subs}{\subset}
\def\br#1;#2;{\bigl[ {#1} \bigr]^ {#2} }
\newcommand{\mc}[1]{\mathcal{#1}}
\newcommand{\mf}[1]{\mathfrak{#1}}
\author[L. Soukup]{Lajos Soukup}
\thanks{The preparation of this paper was partially
supported by  Bolyai Grant, OTKA grants K 61600 and K 68262.}
\address
      { Alfr{\'e}d R{\'e}nyi Institute of Mathematics, Hungarian Academy of Sciences, Budapest, Hungary  }
\email{soukup@renyi.hu}
\urladdr{http://www.renyi.hu/$\sim  $soukup}
\subjclass[2000]{03E04,03E17, 03E35}
\keywords{cardinal invariants, reals, pcf theory, null sets, meager sets, Baire space}
\title
   {Pcf theory and  cardinal invariants of the reals}
\begin{document}
\begin{abstract}
The {\em additivity spectrum} $\nad(\mc{I})$ of an   ideal $\mc{I}\subset \mc{P}(I)$   
is the set of all regular cardinals $\kappa$ such that there is an increasing 
chain $\{A_\alpha:\alpha<\kappa\}\subs \mc{I} $  with $\cup_{\alpha<\kappa}A_\alpha\notin \mc{I}$.

We investigate which set $A$ of regular cardinals can be the additivity spectrum of 
certain ideals.

Assume that  $\mc{I}=\mc{B}$ or $\mc{I}=\mc{N}$, where
$\mc{B}$ denotes the  ${\sigma}$-ideal  
generated by the compact subsets of the Baire 
space $\omega^\omega$, and $\mc{N}$ is the ideal of the null sets.

We show that 
if $A$ 
 is a non-empty progressive set of uncountable regular cardinals
and  $\pcf(A)=A$ then 
$\nad(\mc{I})=A$ in some c.c.c generic extension of the ground model.
On the other hand, we also show that 
 if $A$ is a countable subset of $\nad(\mc{I})$ then 
$\pcf(A)\subs \nad(\mc{I})$.

For countable sets these results  give a full characterization of the additivity spectrum of $\mc{I}$:
a non-empty countable set $A$ of uncountable regular cardinals
can be  $\nad(\mc{I})$ in some c.c.c generic extension iff
$A=\pcf(A)$. 
 \end{abstract}

\maketitle

\section{Introduction}
Many cardinal invariants   
are defined in the following way:
we consider a family  $\mf{X}\subs \mc P(\br {\omega};{\omega};)$
and define our cardinal invariant $\mf x$ as 
$\mf{x}=\min\{|X|:X\in \mf{X}\}$
or
$\mf{x}=\sup\{|X|:X\in \mf{X}\}$.
The set  $\{|X|:X\in \mf{X}\}$ is called the     
{\em  spectrum of  $\mf x$}.

For example, consider the family  
$\mf A=\{\mc A\subs \br \omega;\omega;:\text{$\mc A$ is a MAD}\}$. Then   
$\mf a=\min\{|\mc A|:\mc A\in \mf A\}$, so the we can say  that 
the spectrum of $\mf a$ is the cardinalities of the maximal almost disjoint subfamilies of 
$\br \omega;\omega;$.

The value of many cardinal invariants  can be modified  almost freely by 
using a  suitable forcing, but their spectrums  
should satisfy more requirements.

In \cite{ST} Shelah and Thomas investigated the connections between the cofinality spectrum of certain groups and pcf theory.
Denote ${\rm CF}(Sym(\omega))$  the cofinality
spectrum  of the group of all permutation of natural
numbers, i.e. the set of regular cardinals
$\lambda$ such that $Sym(\omega)$ is the union of an increasing chain of $\lambda$ proper
subgroups. 
Shelah and Thomas  showed that 
$CF(Sym(\omega))$ cannot be an arbitrarily prescribed set of regular uncountable cardinals:
if $A=\< \lambda_n: n\in\omega \>$ is a strictly increasing
sequence of elements of ${\rm CF}(Sym(\omega))$, then ${\rm pcf}(A)\subseteq {\rm
  CF}(Sym(\omega))$.  On the other hand, they also showed that 
if $K$ is a set of regular cardinals which satisfies certain natural requirements
(see \cite[Theorem 1.3]{ST}) then
$CF(Sym(\omega))=K$ in a certain c.c.c generic extension.

In this paper we  investigate  the {\em additivity spectrum} 
of certain ideals in a similar style.
 Denote $\mf{Reg}$ the class of all infinite regular cardinals.
Given any ideal $\mc{I}\subset \mc{P}(I)$ for each $A\in  \mc{I}^+$ 
put
\begin{displaymath}
\nad(\mc{I},A)=\{\kappa\in \mf{Reg}: 
\exists \text{ increasing } \{A_\alpha:\alpha<\kappa\}\subs \mc{I} \text{  s.t.  }
 \cup_{\alpha<\kappa}A_\alpha=A\},
\end{displaymath}
and let
\begin{displaymath}
\nad(\mc{I})=\cup\{\nad(\mc{I},A):A\in \mc{I}^+\}.
\end{displaymath}
Clearly $\add(\mc{I})=\min \nad(\mc{I})$. 
We will say that $\nad(\mc{I})$ is the {\em additivity spectrum of $\mc{I}$}.

As usual, $\mc M$ and $\mc N$ denote the  null and the meager ideals, respectively.
Let $\mc{B}$ denote  the ${\sigma}$-ideal  generated by 
the compact  subsets of ${\omega}^{\omega}$.
 We have
\begin{displaymath}
 \mc B=\{F\subs \br \omega;\omega; :\text{$F$ is $\le^*$-bounded }\}.
\end{displaymath}
So the poset $\<\omega^\omega, \le^*\>$  has a natural, 
cofinal,  order preserving embedding  
$\Phi$ into $\<\mc B,\subs\>$ 
defined by the formula  $\Phi(b)=\{x:x\le^* b\}$.  
Denote by  $\nad(\<\omega^\omega,\le^*\>)$   
 the set of all regular cardinals $\kappa$ such that there is an unbounded $\le^*$-increasing 
chain $\{b_\alpha:\alpha<\kappa\}\subs \omega^\omega$.
Clearly $\nad(\mc B)\supseteq \nad(\<\omega^\omega,\le^*\>)$ and
$\mf b=\min \nad(\mc B)=\min \nad(\<\omega^\omega,\le^*\>)$.
Farah, \cite{Fa},  proved that if GCH holds in the ground model then given any non-empty set 
$A$ of uncountable regular 
cardinals with $\aleph_1\in A$ we have $\nad(\<\omega^\omega,\le^*\>=A$
in some c.c.c extension  of the ground model.
So $\nad(\<\omega^\omega,\le^*\>)$ does not have  any 
closedness property.
Moreover,
standard forcing arguments show that 
 $\nad(\mc{I})\cap \{\aleph_n:1\le n<\omega\}$ can also be arbitrary, 
where $\mc{I}\in \{\mc{B},\mc{M},\mc{B}\}$.

However, the situation change dramatically if we consider the whole spectrum 
$\nad(\mc{I})$.
On one hand, we show that if $\mc{I}\in\{\mc{B},\mc{N}\}$ then  $\nad(\mc{I})$ should be closed under certain
pcf operations:  if $A$ is a countable subset of $\nad(\mc{I})$ then 
$\pcf(A)\subs \nad(\mc{I})$ (see Theorems \ref{tm:n_closed} and \ref{tm:b_closed}).

On the other hand, 
we show that if $A$ is a non-empty set of uncountable regular cardinals, $|A|<\min(A)^{+n}$ for some $n\in\omega$ 
(especially if $A$ is progressive), and  $\pcf(A)=A$ then 
$\nad(\mc{I})=A$ in some c.c.c generic extension of the ground model (see Theorem \ref{tm:spec_sharp_hechler}).

For countable sets these results  give a full characterization of the additivity spectrum of $\mc{I}$:
a non-empty countable set $A$ of uncountable regular cardinals
can be  $\nad(\mc{I})$ in some c.c.c generic extension iff
$A=\pcf(A)$.


\section{Construction of additivity spectrums}\label{sc:neg}

To start with we recall some results from pcf-theory.
We will use the notation and terminology of \cite{AM}.
A set $A\subs \mf{Reg}$ is {\em progressive} iff $|A|<\min(A)$. 

The proofs of the next two proposition are standard applications of pcf theory, and they should have been well-known, but the
author was unable to find reference. Proposition \ref{pr:bound} is similar to
\cite[Theorem 3.20]{ST}, but
we do not use   assumption concerning the cardinal arithmetic.

\begin{proposition}\label{pr:succ}
Assume that $A=\pcf(A)\subs \mf{Reg}$ is a progressive set,  and $\lambda\in \mf{Reg}$.
Then there is a family $\mc F\subs \prod A$  with $|\mc F|<\lambda$ such that
for each $g,h\in \prod A$ 
\begin{displaymath}
\text{if $g<_{J_{<\lambda}[A]}h$ then there is $f\in \mc F$ such that 
$g<\max(f,h)$.
} 
\end{displaymath}
\end{proposition}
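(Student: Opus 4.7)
The conclusion $g<\max(f,h)$ (pointwise) means precisely that $f(a)>g(a)$ on the set $B:=\{a\in A:g(a)\ge h(a)\}$, and $g<_{J_{<\lambda}[A]}h$ asserts exactly $B\in J_{<\lambda}[A]$. So it suffices to produce $\mc F\subs\prod A$ with $|\mc F|<\lambda$ such that for every $g\in\prod A$ and every $B\in J_{<\lambda}[A]$ there is $f\in\mc F$ with $f\restr B>g\restr B$. I would build such families recursively, constructing for each $\mu\in A$ a set $\mc F_{<\mu}$ of size $<\mu$ witnessing this property for $J_{<\mu}[A]$ and a set $\mc F_{\le\mu}$ of size $\le\mu$ witnessing it for $J_{\le\mu}[A]$; the desired $\mc F$ is then produced by one further application of the \emph{limit} step to $\lambda$ itself.

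The engine is the pcf machinery: progressivity of $A$ together with $A=\pcf(A)$ gives, for each $\mu\in A$, a generator $B_\mu\subs A$ with $J_{\le\mu}[A]=J_{<\mu}[A]+B_\mu$, and a scale $\<g^\mu_\alpha:\alpha<\mu\>\subs\prod A$ that is cofinal in $\prod B_\mu$ modulo $J_{<\mu}[A\restr B_\mu]$. The successor step sets
\begin{displaymath}
\mc F_{\le\mu}\;=\;\mc F_{<\mu}\;\cup\;\{\max(g^\mu_\alpha,f):\alpha<\mu,\ f\in\mc F_{<\mu}\cup\{\mb 0\}\},
\end{displaymath}
which has cardinality $\le\mu$. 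To verify it, decompose $B\in J_{\le\mu}[A]$ as $B'\cup B''$ with $B'\subs B_\mu$ and $B''\in J_{<\mu}[A]$; cofinality of the scale supplies $\alpha<\mu$ with $C:=\{a\in B_\mu:g(a)\ge g^\mu_\alpha(a)\}\in J_{<\mu}[A]$, and the inductive hypothesis applied to $C\cup B''$ yields $f'\in\mc F_{<\mu}$ with $f'>g$ on $C\cup B''$; then $\max(g^\mu_\alpha,f')\in\mc F_{\le\mu}$ dominates $g$ on all of $B$.

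At a limit passage I simply set $\mc F_{<\mu}=\bigcup\{\mc F_{\le\mu'}:\mu'\in A\cap\mu\}$; every $B\in J_{<\mu}[A]$ satisfies $\pcf(B)\subs\pcf(A)=A$ and $\max\pcf(B)<\mu$, so some $\mu'\in A\cap\mu$ handles it. The main obstacle---and essentially the only spot where progressivity does real work---is the cardinality accounting: from $|A|<\min(A)\le\mu$ we get $|A\cap\mu|<\mu$, and regularity of $\mu$ then forces $\mu^*:=\sup(A\cap\mu)<\mu$ (else $\cf(\mu)\le|A\cap\mu|<\mu$). Thus $|\mc F_{<\mu}|\le|A\cap\mu|\cdot\mu^*<\mu$, and the same computation with $\lambda$ in place of $\mu$ at the final step yields $|\mc F|<\lambda$.
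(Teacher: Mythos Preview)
Your argument is correct.  The overall architecture matches the paper's: both proofs rest on the generators $B_\mu$ (for $\mu\in A\cap\lambda$) and on covering the ``bad'' set $\{a:g(a)\ge h(a)\}\in J_{<\lambda}[A]$ by finitely many $B_{\mu_i}$, and both output $\mc F$ as a family of pointwise maxima with $|\mc F|\le\sup(A\cap\lambda)<\lambda$.

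The one genuine difference is in the pcf input.  The paper invokes directly the theorem that $\cf(\langle\prod B_\mu,\le\rangle)=\max\pcf(B_\mu)=\mu$ (pointwise cofinality), fixes for each $\mu\in A\cap\lambda$ a \emph{pointwise}-cofinal family $\mc F_\mu\subs\prod B_\mu$ of size $\mu$, and then sets in one stroke
\[
\mc F=\bigl\{\max(f^{\mu_1}_1,\dots,f^{\mu_n}_n):\mu_1<\cdots<\mu_n<\lambda,\ f^{\mu_i}_i\in\mc F_{\mu_i}\bigr\}.
\]
No recursion is needed: once $\{a:g(a)\ge h(a)\}\subs B_{\mu_1}\cup\cdots\cup B_{\mu_n}$, one simply picks $f^{\mu_i}_i>g\restr B_{\mu_i}$ pointwise.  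You instead use only the weaker datum of a scale cofinal in $\prod B_\mu$ \emph{modulo} $J_{<\mu}$, and pay for this with the recursion on $\mu$: the residual error set $C\in J_{<\mu}$ at level $\mu$ is handled by the inductive hypothesis at lower levels.  In effect your induction reproves, inside the argument, the cofinality theorem the paper cites as a black box.  The paper's route is shorter; yours is more self-contained.
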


\begin{proof}
For each $\mu\in \pcf(A)=A$ let $B_\mu\subs A$ be a generator of 
$J_{<\mu^+}[A]$, i.e. 
\begin{displaymath}
J_{<\mu^+}[A]=\<J_{<\mu}[A]\cup\{B_\mu\}\>_{gen}.
\end{displaymath}
Since $cf(\<\prod B_\lambda,\le\>)=\max \pcf(B_\mu)=\mu$  by \cite[Theorem 4.4]{AM}, we can fix 
a family $\mc F_\mu\subs \prod B_\mu$ with $|B_\mu|=\mu$ such that 
$\mc F_\mu$ is cofinal in $\<\prod B_\lambda,\le\>$.

We claim that 
\begin{displaymath}
\mc F=\{\max(f^{\mu_1}_1,\dots, f^{\mu_n}_n): \mu_1<\dots,\mu_n<\lambda,f^{\mu_i}_i\in \mc F_{\mu_i}\}
\end{displaymath}
satisfies the requirements.

Since $A$ is progressive, $|\mc F|\le \sup (A\cap \lambda)<\lambda$.

Assume that $g<_{J_{<\lambda}[A]}h$ for some $g,h\in \prod A$.
Let $X=\{a\in A: g(a)\ge h(a)\}$. Then $X\in J_{<\lambda}[A]$, so there are
$\mu_1,\dots \mu_n\in \pcf(A)\cap \lambda=A\cap \lambda$ such that 
$X\subs B_{\mu_1}\cup \dots \cup B_{\mu_n}$.
For each $1\le i\le n$ choose $f^{\mu_i}_i\in \mc F_{\mu_i}$ with
$g\restriction B_{\mu_i}< f^{\mu_i}_i$.

Then $g<\max (h,f^{\mu_1}_1,\dots, f^{\mu_n}_n )$ and
$\max (f^{\mu_1}_1,\dots, f^{\mu_n}_n )\in \mc F$.
\end{proof}

\begin{proposition}\label{pr:bound}
Assume that $A=\pcf(A)\subs \mf{Reg}$ is a progressive set,  and $\lambda\in \mf{Reg}\setm A$.
If $\<g_\alpha:\alpha<\lambda\>\subs \prod A$ then there are 
$K\in \br \lambda;\lambda;$ and $s\in \prod A$ such that
$g_\alpha<s$ for each $\alpha\in K$. 
\end{proposition}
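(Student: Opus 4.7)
My plan is to use Proposition~\ref{pr:succ} to lift a bound modulo $J_{<\lambda}[A]$ to a pointwise bound. Invoke Proposition~\ref{pr:succ} with the given $\lambda$ to fix a family $\mc F \subs \prod A$ with $|\mc F| < \lambda$. Suppose for the moment that we have produced $h \in \prod A$ satisfying $g_\alpha <_{J_{<\lambda}[A]} h$ for every $\alpha < \lambda$. Then for each $\alpha$, Proposition~\ref{pr:succ} furnishes some $f_\alpha \in \mc F$ with $g_\alpha < \max(f_\alpha, h)$. Since $|\mc F| < \lambda$ and $\lambda$ is regular, the assignment $\alpha \mapsto f_\alpha$ is constant on some $K \in \br \lambda;\lambda;$; fixing such an $f$ and setting $s := \max(f, h) \in \prod A$ gives $g_\alpha < s$ for all $\alpha \in K$, as required.

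It remains to produce $h$. If $\lambda > \max A$ (which exists and equals $\max\pcf(A)$ since $A = \pcf(A)$ is progressive), then $\max\pcf(A) < \lambda$, so $A \in J_{<\lambda}[A]$ and any $h$ works trivially. Otherwise set $\mu^* := \min(A \setm \lambda)$; the hypothesis $\lambda \notin A$ gives $\mu^* > \lambda$, and since $\pcf(A) = A$ is disjoint from the interval $[\lambda, \mu^*)$ one has $J_{<\lambda}[A] = J_{<\mu^*}[A]$. Now invoke the standard pcf fact (see \cite{AM}) that for progressive $A$ and $\mu \in \pcf(A)$, the partial order $\prod A / J_{<\mu}[A]$ is $\mu$-directed. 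Applied with $\mu = \mu^* \in A$ to the family $\{g_\alpha : \alpha < \lambda\}$ of size $\lambda < \mu^*$, this produces the desired common upper bound $h$ modulo $J_{<\mu^*}[A] = J_{<\lambda}[A]$.

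The main obstacle is the second case: the existence of $h$ rests on the $\mu^*$-directedness of $\prod A / J_{<\mu^*}[A]$, and this is where the hypothesis $A = \pcf(A)$ is essential --- both to place $\mu^*$ inside $\pcf(A)$ (so that the directedness theorem applies) and to identify $J_{<\lambda}[A]$ with $J_{<\mu^*}[A]$. Once $h$ is in hand, the lifting via Proposition~\ref{pr:succ} and the pigeonhole over $\mc F$ are routine, using only $|\mc F| < \lambda$ and the regularity of $\lambda$.
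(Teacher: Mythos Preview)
Your proof is correct and follows essentially the same route as the paper's: obtain a $J_{<\lambda}[A]$-upper bound $h$ for all the $g_\alpha$, then apply Proposition~\ref{pr:succ} and pigeonhole over $\mc F$ to upgrade this to a pointwise bound on a set of size $\lambda$. The only cosmetic difference is in producing $h$: the paper observes $J_{<\lambda}[A]=J_{<\lambda^+}[A]$ and appeals to $\lambda^+$-directedness directly, whereas you pass to $\mu^*=\min(A\setminus\lambda)\in\pcf(A)$ and use the $\mu^*$-directedness coming from true cofinality $\mu^*$---these are the same argument.
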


\begin{proof}
If $\lambda >\max \pcf(A)$ then the equality $cf\<\prod A,<\>=\max \pcf(A)$ yields the result.
So we can assume $\lambda<\max \pcf(A)$.

Since $\lambda\notin \pcf(A)$ we have 
$ J_{<\lambda}[A]=  J_{<\lambda^+}[A]$.
So the poset $\< \prod A, <_{J_{<\lambda}[A]}  \>$ is 
$\lambda^+$-directed. Thus there is $h\in \prod A$ such that 
$g_\alpha <_{J_{<\lambda}[A]} h  $ for each $\alpha<\lambda$.

By proposition \ref{pr:succ} there is a family $\mc F\subs  \prod A$ with $|\mc F|<\lambda$
such that for each $\alpha<\lambda$ there is $f_\alpha\in \mc F$ such that 
$g_\alpha<\max (h, f_\alpha)$.
Since $|\mc F|<\lambda$ there are $K\in \br \lambda;\lambda;$ and $f\in \mc F$
such that $f_\alpha=f$ for each $\alpha\in K$.

Then $s=\max(h,f)\in \prod A$ and $K\in \br \lambda;\lambda;$ satisfy  the requirements. 
\end{proof}

We also need the following observation which is a trivial version 
of Proposition \ref{pr:bound} for finite sets.

\begin{observation}\label{ob:finite}
Assume that  $F\subs \mf{Reg}$ is a finite set  and  $\lambda\in \mf{Reg}\setm F$.
If $\<g_\alpha:\alpha<\lambda\>\subs \prod F$ then there are 
$K\in \br \lambda;\lambda;$ and $s\in \prod F$ such that
$g_\alpha\le s$ for each $\alpha\in K$. 
\end{observation}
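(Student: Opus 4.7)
The plan is to split $F$ into the part above $\lambda$ and the part below $\lambda$, which is possible since $\lambda \notin F$. For each $\mu \in F$ we will specify $s(\mu) \in \mu$ separately, and we will thin $K$ only using the finitely many $\mu \in F$ with $\mu < \lambda$. Since $F$ is finite, only finitely many refinements are needed, and each one preserves cardinality $\lambda$ of the selected index set.

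For the large coordinates, fix $\mu \in F$ with $\mu > \lambda$. Then $\langle g_\alpha(\mu) : \alpha < \lambda \rangle$ is a family of $\lambda$ ordinals below $\mu$, and since $\mu$ is regular with $\mu > \lambda$, the supremum $s(\mu) := \sup_{\alpha < \lambda} g_\alpha(\mu)$ is strictly less than $\mu$. Hence $s(\mu) \in \mu$ and $g_\alpha(\mu) \le s(\mu)$ for every $\alpha < \lambda$. No thinning of the index set is required for these $\mu$.

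For the small coordinates, enumerate $\{\mu \in F : \mu < \lambda\}$ as $\mu_1,\dots,\mu_k$ and define $K_0 = \lambda$. Given $K_i \in \br \lambda;\lambda;$, apply the pigeonhole principle to the map $\alpha \mapsto g_\alpha(\mu_{i+1})$ from $K_i$ into $\mu_{i+1}$: since $|K_i| = \lambda$ and $\mu_{i+1} < \lambda$ with $\lambda$ regular, there is some value $s(\mu_{i+1}) < \mu_{i+1}$ whose preimage in $K_i$ has cardinality $\lambda$; take this preimage as $K_{i+1}$. After $k$ steps set $K := K_k$.

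Finally, combining the two cases, the function $s \in \prod F$ just defined satisfies $g_\alpha \le s$ for every $\alpha \in K$, and $|K| = \lambda$. There is no real obstacle: the argument is a routine pigeonhole together with the regularity of each $\mu > \lambda$ to bound the supremum, and the finiteness of $F$ to make the iterative refinement trivial.
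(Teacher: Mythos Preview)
Your argument is correct and follows essentially the same approach as the paper: split $F$ into coordinates below $\lambda$ and coordinates above $\lambda$, use pigeonhole on the small coordinates to thin the index set, and use regularity of the large coordinates to bound the supremum. The only cosmetic difference is that the paper applies pigeonhole once to the whole finite product $\prod(F\cap\lambda)$ (using $|\prod(F\cap\lambda)|<\lambda$) rather than coordinate by coordinate, and it takes the supremum over the already-thinned set $K$ rather than over all of $\lambda$; neither change affects anything.
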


\begin{proof}
Let $F_1=F\cap \lambda$ and $F_2=F\setm \kappa$.
Since $|\prod F_1|<\lambda=\cf(\lambda)$ there are
$K\in \br \lambda;\lambda;$ and $s_1\in \prod F_1$ such that
$g_\alpha\restriction F_1=s_1$ for each $\alpha\in K$.

Now define $s_2\in \prod F_2$ as follows:  $s_2(a)=\sup\{g_\alpha(a):\alpha\in K\}$.
Then $K$ and $s=s_1^\frown s_2$ satisfy the requirements.   
\end{proof}

\begin{theorem}
\label{tm:spec_sharp_hechler}
Assume that  $\mc{I}$ is one of the ideals $\mc{B},\mc{M}$ and $\mc{N}$.
If $A=\pcf(A)$ is a non-empty set of uncountable regular cardinals, $|A|<\min(A)^{+n}$ for some $n\in\omega$,
then $A=\nad(\mc{I})$ in some c.c.c generic extension $V^P$.

Especially, if $\empt\ne Y\subs \pcf(\{\aleph_n:1 \le n<\omega\})$ then 
$\pcf(Y)=\nad(\mc{I})$ in some c.c.c generic extension $V^P$.
\end{theorem}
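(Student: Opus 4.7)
The plan is to build a c.c.c.\ forcing $P$ as a finite-support iteration whose ``shape'' is dictated by the generator structure of pcf theory on $A$. For $\mc{I}=\mc{B}$ the basic building block is Hechler forcing, which generically adds an $\le^*$-dominating real; for $\mc{I}=\mc{N}$ and $\mc{I}=\mc{M}$ one substitutes the standard Bartoszynski-style forcings whose generics add slaloms or eventually-different reals and convert $\le^*$-bounding phenomena on $\omega^\omega$ into additivity phenomena for the respective ideal. Thus the whole argument can be designed for $\mc{B}$ and then transported uniformly to the other two ideals.

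First I would fix, for each $\mu\in A$, a pcf-generator $B_\mu\subs A$ of $J_{<\mu^+}[A]$ over $J_{<\mu}[A]$, and set $\theta=\max\pcf(A)$. The iteration is taken of length $\theta$ and arranged so that, for every $\kappa\in A$, an $\le^*$-cofinal increasing sequence $\<b^\kappa_\alpha:\alpha<\kappa\>$ of Hechler generics is produced; this already yields $A\subs\nad(\mc{B})$. The crucial design requirement is that the generic reals can be collectively indexed by (a cofinal subset of) $\prod A$, so that every $\le^*$-cofinal sequence appearing in $V^P$ is, modulo $\le^*$, dominated by a corresponding sequence in $\prod A$ from the ground model.

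To obtain the reverse inclusion $\nad(\mc{B})\subs A$, suppose $\lambda\in\mf{Reg}\setm A$ is uncountable and $\<f_\alpha:\alpha<\lambda\>\subs\omega^\omega$ is $\le^*$-increasing in $V^P$. By the c.c.c., each $f_\alpha$ is named by a sub-iteration of size $<\lambda$, and via the indexing it is $\le^*$-dominated by some $g_\alpha\in\prod A$ living in the ground model. Now invoke Proposition~\ref{pr:bound}: the hypothesis $|A|<\min(A)^{+n}$ permits a finite-step reduction, at each step splitting off a progressive piece of $A$ on which Proposition~\ref{pr:bound} applies directly and using Observation~\ref{ob:finite} to absorb the finite error produced by the generators $B_{\mu_1},\ldots,B_{\mu_k}$ with $\mu_i<\lambda$. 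A standard pigeonhole on $\lambda$ then extracts $K\in\br\lambda;\lambda;$ and $s\in\prod A$ with $g_\alpha\le s$ for all $\alpha\in K$, whence $\{f_\alpha:\alpha\in K\}$ is $\le^*$-bounded, contradicting cofinality.

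The main obstacle is the forcing construction itself: one must simultaneously ensure that for every $\kappa\in A$ a genuine $\le^*$-unbounded $\kappa$-chain is added, and that no accidental $\le^*$-unbounded $\lambda$-chain creeps in for $\lambda\in\mf{Reg}\setm A$. This is a matrix/template-style iteration whose bookkeeping walks along the generators $\{B_\mu:\mu\in A\}$ so that the ``abstract'' bounding structure of $\prod A$ under the pcf ordering really does govern the $\le^*$-order on the generics. Once carried out for $\mc{B}$, the $\mc{M}$- and $\mc{N}$-cases follow by replacing Hechler forcing with the appropriate Bartoszynski forcing, since every pcf-absorption step of the argument refers only to $\le^*$-bounding behaviour of the generic reals.
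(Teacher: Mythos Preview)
Your combinatorial back end---using Proposition~\ref{pr:bound} on a progressive piece of $A$, Observation~\ref{ob:finite} on a finite leftover, and a pigeonhole on $\lambda$---is exactly what the paper does, and the partition $A=F\cup^* Y$ with $F$ finite, $Y$ progressive is precisely how the paper handles the hypothesis $|A|<\min(A)^{+n}$. So that half of your argument matches.

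Where you diverge is in the forcing. You propose to build a bespoke matrix/template iteration for each ideal, arranging generics to be indexed by $\prod A$ and then arguing via nice names that every real in $V^P$ is $\le^*$-dominated by one of the generics. The paper does none of this. Instead it isolates an abstract \emph{Hechler property}: an ideal $\mc I$ has it if for every $\sigma$-directed poset $Q$ there is a c.c.c.\ $P$ such that in $V^P$ some cofinal subset of $\<\mc I,\subseteq\>$ is order-isomorphic to $Q$. The paper then simply cites that $\mc B$, $\mc M$, $\mc N$ all have the Hechler property (Hechler for $\mc B$; Bartoszynski--Kada for $\mc M$; Burke--Kada for $\mc N$), applies it to $Q=\<\prod A,\le\>$, and is done: the cofinal, order-isomorphic family $\{I_q:q\in\prod A\}$ immediately gives $A\subseteq\nad(\mc I)$ via coordinate sequences, and gives $\nad(\mc I)\subseteq A$ via the pcf bounding argument you already have, with no need to inspect names or the internal structure of $P$.

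So your plan is not wrong, but it reinvents the hardest part as an undone obligation (you yourself flag the template construction as ``the main obstacle'' and leave it at the level of a specification). The paper's route is both shorter and more modular: it treats the three ideals uniformly by quoting three known theorems, and the only new work is the pcf bounding, which you already have. If you want to follow your route you would have to actually carry out the template iteration and prove that the generics are cofinal in $\<\mc I,\subseteq\>$; that is essentially reproving the cited Hechler-type theorems.
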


The proof is based on Theorem \ref{tm:sharp_hechler} below. To formulate it we need the following definition.

\begin{definition}
Let  $\varphi$ be  a formula with one free variable, 
and assume that   
$ZFC\vdash $ ''{\em $I_\varphi=\{x:\varphi(x)\}$ is an ideal}''.
We say that the ideal $\mc{I}_\varphi$ has the {\em Hechler property} iff  
given any   ${\sigma}$-directed poset $Q$ there is a c.c.c poset $P$
 such that 
\begin{displaymath}
V^P\models \text{some cofinal subset $\{I_q:q\in Q\}$ of $\<\mc{I}, \subs\>$ is order isomorphic to $Q$.}
\end{displaymath}

\end{definition}

If $\mc{I}_\varphi=\mc{I}_\psi$, then clearly $\mc{I}_\varphi$ is Hechler iff $\mc{I}_\psi$ is.
So for well-known ideals, i.e.  for $\mc{B}$ and $ \mc{N}$, we will speak about
the {\em Hechler property of $\mc{I}$} instead of the Hechler property of $\mc{I}_\phi$, where
$\phi$ is one of the many  equivalent definitions of $\mc{I}$.

\begin{theorem}\label{tm:sharp_hechler}
Assume that the ideal $\mc{I}$ has the Hechler property.
If $A=\pcf(A)$ is a non-empty set of uncountable regular cardinals, $|A|<\min(A)^{+n}$ for some $n\in\omega$,
then in some c.c.c generic extension $V^P$
we have $A=\nad(\mc{I})$.
\end{theorem}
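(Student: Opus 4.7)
The plan is to apply the Hechler property to $Q:=\prod A$ equipped with the componentwise order. Since $\min(A)>\omega$, every coordinate is an uncountable regular cardinal, so countable subsets of $Q$ admit coordinatewise suprema and $Q$ is $\sigma$-directed. The Hechler property then yields a c.c.c poset $P$ such that in $V^P$ the ideal $\mc{I}$ has a cofinal subfamily $\{I_q:q\in Q\}$ order-isomorphic to $Q$. I will argue $\nad(\mc{I})=A$ in $V^P$.

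For $A\subs \nad(\mc{I})$: given $\kappa\in A$, fix any $\<q_\alpha:\alpha<\kappa\>\subs Q$ strictly increasing on the $\kappa$-coordinate (and constant elsewhere). Then $\<I_{q_\alpha}\>$ is an increasing chain in $\mc{I}$, and its union cannot lie inside any $I_q$, for otherwise the $\kappa$-coordinate of $q$ would dominate a cofinal subset of $\kappa$, contradicting $\kappa=\cf(\kappa)$. By cofinality of $\{I_q\}$ in $\mc{I}$, the union is in $\mc{I}^+$.

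For the reverse inclusion, let $\<B_\alpha:\alpha<\kappa\>$ be increasing in $\mc{I}$ with union in $\mc{I}^+$ and $\kappa\in \mf{Reg}$. Choose $q_\alpha\in Q$ with $B_\alpha\subs I_{q_\alpha}$. If $\kappa\notin A$, it is enough to produce a cofinal $K\subs \kappa$ and $s\in Q$ with $q_\alpha\le s$ for $\alpha\in K$: then $\bigcup B_\alpha=\bigcup_{\alpha\in K}B_\alpha\subs I_s\in \mc{I}$ would contradict $\bigcup B_\alpha\notin \mc{I}$. So it suffices to show, in $V^P$, that any $\kappa$-sequence in $\prod A$ with $\kappa\in \mf{Reg}\setm A$ has a cofinal bounded subsequence.

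The main obstacle sits here and has two parts. First, Proposition \ref{pr:bound} requires $A$ progressive, but the hypothesis only gives $|A|<\min(A)^{+n}$. I would decompose $A=A'\cup A''$ with $A':=A\cap \min(A)^{+n}$ and $A'':=A\setm \min(A)^{+n}$: then $A'$ is finite (at most $n$ regular cardinals fit in it), while $A''$ is progressive since $|A''|\le |A|<\min(A)^{+n}\le \min(A'')$, and $\pcf(A'')\subs \pcf(A)\cap [\min(A''),\infty)=A''$, so $A''=\pcf(A'')$. Given $\<g_\alpha:\alpha<\kappa\>\subs \prod A$ with $\kappa\notin A$, first apply Observation \ref{ob:finite} to the $A'$-projections (using $\kappa\notin A'$) to pass to a cofinal $K_1$ with common bound $s_1\in \prod A'$, and then apply Proposition \ref{pr:bound} to the $A''$-projections restricted to $K_1$ (using $\kappa\notin A''$) to extract a cofinal $K_2\subs K_1$ with bound $s_2\in \prod A''$; the concatenation $s=s_1{}^\frown s_2\in \prod A$ bounds the original sequence on $K_2$. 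Second, this argument lives in $V$, while $\<q_\alpha\>$ lives in $V^P$. I bridge this via the c.c.c of $P$: each $q_\alpha$ admits at most countably many possible realizations, yielding a countable $G_\alpha\in V$ with $q_\alpha\in G_\alpha$ forced; put $\tilde q_\alpha:=\sup G_\alpha\in \prod A$, which is well-defined in $V$ by the uncountable regularity of the coordinates, and satisfies $q_\alpha\le \tilde q_\alpha$. Applying the $V$-bound above to $\<\tilde q_\alpha:\alpha<\kappa\>$ produces the required $s\in V\subs V^P$ and cofinal $K$, finishing the proof.
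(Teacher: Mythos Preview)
Your approach is the paper's own: apply the Hechler property to $Q=\prod A$, show $A\subseteq\nad(\mc I)$ via coordinate-increasing sequences, and for $\lambda\notin A$ split $A$ into a finite initial segment and a progressive tail (your $A',A''$ are exactly the paper's $F,Y$), then invoke Observation~\ref{ob:finite} and Proposition~\ref{pr:bound}. You are in fact more explicit than the paper both in checking $\pcf(A'')=A''$ and in handling the $V$ versus $V^P$ transfer, which the paper leaves implicit. One small correction to your transfer step: when $A$ is uncountable a name $\dot q_\alpha$ need not have only countably many realizations as an element of $\prod A$; argue coordinatewise instead (each $\dot q_\alpha(a)$ has a countable set of possible values by c.c.c., so set $\tilde q_\alpha(a)$ to be their supremum, which lies below $a$ since $a$ is an uncountable regular cardinal), and then the rest of your argument goes through unchanged.
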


\begin{proof}[Proof of theorem \ref{tm:spec_sharp_hechler} from Theorem \ref{tm:sharp_hechler}]
To prove the first part of the theorem, it is enough  to show that $\mc{I}$
has the Hechler property.
However 
\begin{itemize}
\item Hechler proved in \cite{He}, that $\mc{B}$ has the Hechler property,
 \item Bartoszynski and   Kada showed in  \cite{BaKa} that  $\mc{M}$ has the Hechler property,
\item Burke and   Kada proved in  \cite{BuKa} that  $\mc{N}$ has the Hechler property.
\end{itemize}
This proves the first part of the theorem.

Assume now that $Y\subs \pcf(\{\aleph_n:1 \le n<\omega\})$.
Then $A=\pcf(Y)$ has cardinality $<\omega_4$  by the celebrated theorem of Shelah.
Thus $|A|<\min(A)^{+4}$, so we can apply the first part of the present Theorem.
 \end{proof}

\medskip

\noindent{\em Remark.} The problem whether $\mc{N}$ and  $\mc{M}$ have the Hecler property was
raised a preliminary version of the present paper. 

\medskip

\begin{corollary}\label{cor:sharp_hechler}
 If the  ideal $\mc{I}$ has the Hechler property and
 $\cf(\br \aleph_\omega;\omega;,\subs)>\aleph_{{\omega}+1}$
then in some c.c.c generic extension
$\nad (\mc{I})\cap\aleph_{\omega}$ 
is infinite but 
$\aleph_{\omega+1}\notin \nad (\mc{I})$.
\end{corollary}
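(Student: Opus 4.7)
The plan is to construct a set $A=\pcf(A)\subs \mf{Reg}$ such that $A\cap \aleph_\omega$ is infinite, $\aleph_{\omega+1}\notin A$, and $|A|<\min(A)^{+4}$; applying Theorem~\ref{tm:sharp_hechler} to $A$ then yields a c.c.c generic extension $V^P$ in which $\nad(\mc{I})=A$, and since c.c.c forcing preserves cardinals the required conclusion follows at once.

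Let $B=\{\aleph_n:1\le n<\omega\}$. By Shelah's theorem $\cf(\br \aleph_\omega;\omega;,\subs)=\max\pcf(B)$, so the hypothesis reads $\max\pcf(B)>\aleph_{\omega+1}$. If it happens that $\aleph_{\omega+1}\notin\pcf(B)$, simply set $A=\pcf(B)$. Otherwise let $B_{\aleph_{\omega+1}}$ be the generator of $J_{<\aleph_{\omega+2}}[B]$ over $J_{<\aleph_{\omega+1}}[B]$ and put $A_0=B\setm B_{\aleph_{\omega+1}}$. Then $A_0$ must be infinite: if $A_0$ were bounded in $\aleph_\omega$ we would have $A_0\in J_{<\aleph_{\omega+1}}[B]$, whence $B=A_0\cup B_{\aleph_{\omega+1}}\in J_{<\aleph_{\omega+2}}[B]$, contradicting $\max\pcf(B)>\aleph_{\omega+1}$. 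Moreover, any ultrafilter $U$ on $A_0$ extends to an ultrafilter $U'$ on $B$ with $A_0\in U'$, so $B_{\aleph_{\omega+1}}\notin U'$; the standard characterization of generators (a non-principal ultrafilter $V$ on $B$ satisfies $\cf(\prod B/V)=\aleph_{\omega+1}$ iff $B_{\aleph_{\omega+1}}\in V$) then forces $\cf(\prod A_0/U)=\cf(\prod B/U')\ne \aleph_{\omega+1}$, so $\aleph_{\omega+1}\notin \pcf(A_0)$. Take $A=\pcf(A_0)$ in this case.

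In either case $A=\pcf(A)$ by idempotence of $\pcf$, $A\cap\aleph_\omega$ is infinite (it contains $B$ or $A_0$ respectively), and $\aleph_{\omega+1}\notin A$. Shelah's bound $\pcf(\{\aleph_n:1\le n<\omega\})\subs \aleph_{\omega_4}$ gives $|A|\le \aleph_4<\aleph_5=\min(A)^{+4}$, so Theorem~\ref{tm:sharp_hechler} applies and produces the required c.c.c extension. The delicate step is the second case---proving $\aleph_{\omega+1}\notin\pcf(A_0)$ by reading off the generator of $J_{<\aleph_{\omega+2}}[B]$; everything else (idempotence of $\pcf$, Shelah's cardinal bound, and cardinal preservation under c.c.c forcing) is routine.
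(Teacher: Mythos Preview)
Your approach matches the paper's: construct a pcf-closed set $A$ of uncountable regulars with $A\cap\aleph_\omega$ infinite and $\aleph_{\omega+1}\notin A$, then apply Theorem~\ref{tm:sharp_hechler}. The paper simply asserts (without argument) that there is an infinite $X\subseteq\{\aleph_n:n<\omega\}$ with $\pcf(X)=X\cup\{\aleph_{\omega+2}\}$ and takes $A=X\cup\{\aleph_{\omega+2}\}$; your explicit construction via the complement of the generator, $A_0=B\setminus B_{\aleph_{\omega+1}}$, is exactly the standard way to justify such an assertion, and your verification that $\aleph_{\omega+1}\notin\pcf(A_0)$ is correct (note that $J_{<\aleph_{\omega+1}}[B]$ consists precisely of the finite subsets of $B$, so the ``iff'' you quote does hold for non-principal ultrafilters).

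Two small slips in the final line, neither fatal: the inclusion $\pcf(B)\subseteq\aleph_{\omega_4}$ does not by itself bound $|\pcf(B)|$ --- quote instead Shelah's bound $|\pcf(B)|<|B|^{+4}=\aleph_4$; and $\min(A)=\min(A_0)$ need not be $\aleph_1$, so write $\min(A)^{+4}\ge\aleph_5$ rather than $=\aleph_5$. With these adjustments the inequality $|A|<\min(A)^{+4}$ still follows and the argument goes through. (Your Case~1 is in fact vacuous, since $\aleph_{\omega+1}\in\pcf(B)$ always, but including it does no harm.)
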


\begin{proof}[Proof of the corollary]
 If $\max \operatorname{pcf}(\{\aleph_n:1\le
n<{\omega}\})=\cf(\br \aleph_\omega;\omega;,\subs)>\aleph_{{\omega}+1}$ then there is an infinite 
set $X\subs \{\aleph_n:n\in {\omega}\}$
such that  $\pcf(X)=X\cup\{\aleph_{\omega+2}\}$ . Now we can apply theorem \ref{tm:sharp_hechler} for that 
$A=X\cup\{\aleph_{\omega+2}\}$
to obtain the desired extension.
\end{proof}

\begin{proof}[Proof of theorem \ref{tm:sharp_hechler}]
Since $|A|<\min(A)^{+n}$, there is a partition
$F\cup^* Y$ of $A$ such that $F$ is finite, $Y$ is progressive, and 
$\max (F)<\min(Y)$.
 
Let $Q=\<\prod A,\le\>$, where  $f\le f'$ iff
$f({\kappa})\le g({\kappa}) $ for each ${\kappa}\in A$. Then $Q$ is
${\sigma}$-directed because $\aleph_0\notin A$. Since  $\mc{I}$ is Hechler, there is a c.c.c poset $P$
such that in $V^P$  the ideal $\mc{I}$ has a cofinal subset  $\{I_q:q\in Q\}$
which is order-isomorphic to $Q$, i.e. $I_q\subs I_{q'}$ iff $q\le_Q q'$.

We are going to show that the model $V^P$ satisfies our requirement.

\begin{claim}
$A\subs \nad (\mc{I})$.
\end{claim}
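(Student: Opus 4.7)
The plan is to build, for each fixed $\kappa\in A$, a strictly $\subseteq$-increasing sequence of length $\kappa$ inside the cofinal copy $\{I_q:q\in Q\}$ whose union cannot be bounded by any single $I_g$, and hence cannot lie in $\mathcal{I}$.

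Concretely, fix $\kappa\in A$ and for each $\alpha<\kappa$ define $f_\alpha\in\prod A$ by $f_\alpha(\kappa)=\alpha$ and $f_\alpha(\mu)=0$ for every $\mu\in A\setm\{\kappa\}$. Then $f_\alpha<f_\beta$ in $Q$ whenever $\alpha<\beta$, so by the order isomorphism between $Q$ and $\{I_q:q\in Q\}$ the sets $I_{f_\alpha}$ form a strictly $\subs$-increasing $\kappa$-chain of members of $\mc{I}$. To conclude that $\kappa\in\nad(\mc{I})$ it remains to verify that $B:=\bigcup_{\alpha<\kappa}I_{f_\alpha}\notin \mc{I}$.

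Suppose for contradiction that $B\in\mc{I}$. Since $\{I_q:q\in Q\}$ is cofinal in $\<\mc{I},\subs\>$, there is some $g\in\prod A$ with $B\subs I_g$, and in particular $I_{f_\alpha}\subs I_g$ for every $\alpha<\kappa$. By the order isomorphism this gives $f_\alpha\le g$ in $Q$ for every $\alpha<\kappa$, and evaluating at $\kappa$ yields $\alpha=f_\alpha(\kappa)\le g(\kappa)$ for all $\alpha<\kappa$. But $g\in\prod A$ means $g(\kappa)<\kappa$, which is the desired contradiction.

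I do not anticipate any real obstacle here: the construction of the ``diagonal'' functions $f_\alpha$ is immediate, strict increase transfers through the order isomorphism for free, and the non-boundedness argument only uses that $g(\kappa)<\kappa$ for $g\in\prod A$. The progressive/finite decomposition of $A$ and the pcf estimates from Propositions \ref{pr:succ} and \ref{pr:bound} are not needed for this direction; they will presumably be invoked in the companion claim $\nad(\mc{I})\subs A$, where one has to bound arbitrary increasing chains in $\mc{I}$ (equivalently in $Q$) by diagonal elements of $\prod A$.
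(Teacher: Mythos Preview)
Your proof is correct and follows essentially the same approach as the paper: define the ``diagonal'' functions $f_\alpha\in\prod A$ supported at the single coordinate $\kappa$, and use the order isomorphism to transfer the unboundedness of $\{f_\alpha:\alpha<\kappa\}$ in $Q$ to unboundedness of $\{I_{f_\alpha}:\alpha<\kappa\}$ in $\langle\mc{I},\subseteq\rangle$. The only difference is that you spell out the unboundedness step (via cofinality of $\{I_q:q\in Q\}$ and the observation $g(\kappa)<\kappa$) slightly more explicitly than the paper does.
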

\begin{proof}
Fix $\kappa\in A$. For each  ${\alpha}<\kappa$ consider the
function $g_{\alpha}\in \prod A$ defined by the
formula
\begin{displaymath}
g_\alpha(a)=\left\{\begin{array}{ll}
{\alpha}&\text{if $a={\kappa},$}\\
0&\text{otherwise.}
\end{array}\right.
\end{displaymath}
Then $\{g_{\alpha}:{\alpha}<\aleph_n\}$ is $\le$-increasing and unbounded in 
$Q$ so $\{I_{g_{\alpha}}:{\alpha}<\kappa\}$ is increasing
and unbounded in $\<\mc{I},\subs\>$.
Hence $\kappa\in \nad (\mc{I})$.
\end{proof}

\begin{claim}
$\nad(\mc{I})\subs A$.
\end{claim}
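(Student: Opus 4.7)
The plan is to take an arbitrary $\lambda\in \nad(\mc{I})$ in $V^P$ and show $\lambda\in A$, arguing by contradiction: assume $\lambda\notin A$. Since each of $\mc{B},\mc{M},\mc{N}$ is a $\sigma$-ideal, $\lambda$ must be an uncountable regular cardinal. The strategy is to pull a witnessing chain down to a sequence in $\prod A$, then kill its unboundedness by applying Proposition \ref{pr:bound} to the progressive part and Observation \ref{ob:finite} to the finite part of the decomposition $A=F\cup^* Y$.

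First I would verify that $Y=\pcf(Y)$, which is what licenses the use of Proposition \ref{pr:bound} on $Y$. We have $\pcf(Y)\subs \pcf(A)=A$, and since every element of $\pcf(Y)$ is $\ge \min(Y)>\max(F)$, the intersection $\pcf(Y)\cap F$ is empty, so $\pcf(Y)\subs Y$, and the reverse inclusion is automatic. Since $\lambda\notin A$ we also have $\lambda\notin Y$ and $\lambda\notin F$, the hypotheses needed by \ref{pr:bound} and \ref{ob:finite}.

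Next, take any $\subs$-increasing chain $\{J_\alpha:\alpha<\lambda\}\subs \mc{I}$ with $\bigcup_{\alpha<\lambda}J_\alpha\notin\mc{I}$. Using cofinality of $\{I_q:q\in Q\}$ in $\<\mc{I},\subs\>$, pick for each $\alpha<\lambda$ a function $q_\alpha\in \prod A$ with $J_\alpha\subs I_{q_\alpha}$. Applying Proposition \ref{pr:bound} to $\<q_\alpha\restr Y:\alpha<\lambda\>\subs \prod Y$ yields $K_1\in \br\lambda;\lambda;$ and $s^Y\in \prod Y$ with $q_\alpha\restr Y<s^Y$ for all $\alpha\in K_1$. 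Applying Observation \ref{ob:finite} to $\<q_\alpha\restr F:\alpha\in K_1\>$ (reindexed as a length-$\lambda$ sequence) yields $K_2\in \br K_1;\lambda;$ and $s^F\in \prod F$ with $q_\alpha\restr F\le s^F$ for $\alpha\in K_2$.

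Setting $s=s^F\cup s^Y\in \prod A$, we get $q_\alpha\le s$ and therefore $J_\alpha\subs I_{q_\alpha}\subs I_s$ for every $\alpha\in K_2$. Since $\lambda$ is regular and $|K_2|=\lambda$, $K_2$ is cofinal in $\lambda$; combined with the monotonicity of $\{J_\alpha\}$ this gives $\bigcup_{\alpha<\lambda}J_\alpha=\bigcup_{\alpha\in K_2}J_\alpha\subs I_s\in\mc{I}$, a contradiction. The one delicate point is to make sure that Proposition \ref{pr:bound} is usable \emph{in $V^P$}, i.e.\ that progressivity of $Y$ and the equality $Y=\pcf(Y)$ persist into the c.c.c.\ extension; progressivity is immediate from preservation of cardinals, and $\pcf(Y)$ is unchanged because the generators $B_\mu$ witnessing $\pcf(Y)=Y$ in $V$ continue to generate the corresponding ideals in $V^P$ under c.c.c.\ forcing. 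I expect this absoluteness check to be the only real obstacle; the combinatorial core of the argument is the two-step bounding via \ref{pr:bound} and \ref{ob:finite}.
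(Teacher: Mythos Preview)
Your argument is correct and is essentially the paper's own proof: pull the chain back to $\prod A$ via the cofinal $\{I_q\}$, then bound on a cofinal set of indices by combining Proposition~\ref{pr:bound} on the progressive part $Y$ with Observation~\ref{ob:finite} on the finite part $F$, and conclude that the union lies below $I_s$. You are in fact more explicit than the paper on two points it leaves tacit---the verification that $Y=\pcf(Y)$ (needed to invoke Proposition~\ref{pr:bound}) and the c.c.c.\ absoluteness of the relevant pcf data---so nothing is missing.
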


\begin{proof}[Proof of the claim]

Assume that $\lambda\in \mf{Reg}\setm A$.
We show show that 
$\lambda \notin \nad (\mc{I})$.

Let 
$\mf J=\{J_{\alpha}:{\alpha}<\lambda\}
\subs \mc{I}$ be increasing.

For each ${\alpha}<\lambda$ pick $g_{\alpha}\in \prod A$
such that 
$J_{\alpha}\subs  I_{g_{\alpha}}$.

Since $\lambda \notin \pcf(A)$,  applying Proposition  \ref{pr:bound} for 
$Y$ and Observation  
\ref{ob:finite} for $F$ we obtain
 $K\in \br \lambda;\lambda;$ and $s\in \prod A$ such that 
$g_\alpha\le s$ for each $\alpha\in K$.

Thus $J_\alpha\subs I_s$ for $\alpha \in I$.
Since the sequence $\mf J=\{J_{\alpha}:{\alpha}<\lambda\}$ is increasing
and $K$ is cofinal in $\lambda$
we have 
\begin{displaymath}
 \cup \{J_{\alpha}:{\alpha}<\lambda\}=
 \cup \{J_{\alpha}:{\alpha}\in K\}\subs I_s.
\end{displaymath}
So the sequence  $\mf J=\{J_{\alpha}:{\alpha}<\lambda\}$
does not witness that $\lambda\in \nad(\mc{I})$.

Since $\mf J$ was arbitrary, we proved the claim.
\end{proof}

The two claims complete the proof of the theorem.
\end{proof}

\section{Restrictions on the additivity spectrum}
\label{sc:gen_pos}

The first theorem we prove here resembles to \cite[Theorem 2.1]{ST}. 

\begin{theorem}\label{tm:gen_pos}
Assume that $\mc{I}\subs \mc{P}(I)$ is a $\sigma$-complete  ideal, $Y\in \mc{I}^+$, and 
$A\subs \nad(\mc{I},Y)$ is countable. Then $\pcf(A)\subs \nad(\mc{I},Y)$.  
\end{theorem}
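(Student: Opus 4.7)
The plan is: fix $\lambda\in \pcf(A)$; since $A\subs \nad(\mc{I},Y)$ already handles $\lambda\in A$, we may assume $\lambda\in \pcf(A)\setm A$. By the definition of $\pcf$, pick an ultrafilter $D$ on $A$ with $\cf(\prod A/D)=\lambda$ and fix a $<_D$-increasing, $<_D$-cofinal sequence $\<f_\beta:\beta<\lambda\>\subs \prod A$. For each $\kappa\in A$ fix a witnessing chain $\<A^\kappa_\alpha:\alpha<\kappa\>\subs \mc{I}$ with $\bigcup_{\alpha<\kappa}A^\kappa_\alpha=Y$, and for $y\in Y$ define $\phi_y\in \prod A$ by $\phi_y(\kappa)=\min\{\alpha<\kappa : y\in A^\kappa_\alpha\}$.

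Next, for $f\in \prod A$ set $B_f=\bigcup_{\kappa\in A}A^\kappa_{f(\kappa)}$. Because $A$ is countable and $\mc{I}$ is $\sigma$-complete, $B_f\in \mc{I}$; moreover $B_f\subs Y$, and $y\in B_f$ iff there is some $\kappa\in A$ with $f(\kappa)\ge \phi_y(\kappa)$. The naive guess $C_\beta:=B_{f_\beta}$ is typically not $\subs$-increasing, because $<_D$-increasing does not entail pointwise $\le$-increasing. The key move is to monotonize by a tail intersection,
\[
 C_\beta \;=\; \bigcap_{\beta\le \gamma<\lambda} B_{f_\gamma}.
\]
Since $C_\beta\subs B_{f_\beta}\in \mc{I}$ we get $C_\beta\in \mc{I}$, and $C_\beta\subs C_{\beta'}$ for $\beta\le \beta'$ is immediate from shrinking the index set.

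It remains to verify $\bigcup_{\beta<\lambda}C_\beta=Y$. Given $y\in Y$, cofinality of the scale supplies $\gamma_y<\lambda$ with $\phi_y<_D f_{\gamma_y}$; for every $\gamma\ge \gamma_y$, transitivity of $<_D$ along the scale yields $\phi_y<_D f_\gamma$, so the $D$-positive set $\{\kappa : \phi_y(\kappa)<f_\gamma(\kappa)\}$ is in particular nonempty, and any such $\kappa$ witnesses $y\in B_{f_\gamma}$. Hence $y\in \bigcap_{\gamma\ge \gamma_y}B_{f_\gamma}=C_{\gamma_y}$, so the chain $\<C_\beta:\beta<\lambda\>$ witnesses $\lambda\in \nad(\mc{I},Y)$. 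The only real obstacle is the monotonicity gap between $<_D$ and pointwise $\le$, and the intersection-of-tails trick dissolves it precisely because ideals are downward closed (making $C_\beta\in \mc{I}$ automatic); countability of $A$ enters only to place each $B_f$ into $\mc{I}$ via $\sigma$-completeness.
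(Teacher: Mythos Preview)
Your proof is correct, but it takes a different route from the paper's. The paper, instead of your union $B_f=\bigcup_{\kappa\in A}A^\kappa_{f(\kappa)}$, defines the ``$\mc U$-majority'' set
\[
U(f)=\bigl\{y\in I:\{\kappa\in A:y\in A^\kappa_{f(\kappa)}\}\in D\bigr\},
\]
and observes that $U(f)\subseteq B_f\in\mc I$ while $f_1\le_D f_2$ already implies $U(f_1)\subseteq U(f_2)$; hence $\langle U(f_\beta):\beta<\lambda\rangle$ is increasing with union $Y$ \emph{without} any further monotonization. So the paper lets the ultrafilter do the monotonizing for free, whereas you first take the cruder union $B_f$ and then repair the lack of $\subseteq$-monotonicity with the tail intersection $C_\beta=\bigcap_{\gamma\ge\beta}B_{f_\gamma}$, using downward closure of $\mc I$ to keep $C_\beta\in\mc I$. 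Both arguments use countability of $A$ only to place $B_f$ (respectively $U(f)$) in $\mc I$ via $\sigma$-completeness. Your approach is slightly less direct but perfectly valid; the paper's has the advantage that the witnessing sets are defined uniformly from $f$ alone (not from the whole tail of the scale), which is conceptually cleaner and is reused in the later proofs for $\mc B$ and $\mc N$. As a minor note, your initial case split ``assume $\lambda\notin A$'' is harmless but unnecessary: the argument goes through verbatim for $\lambda\in A$ as well.
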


\begin{proof}
For each $a\in A$ fix an increasing  sequence $\mathfrak
F_a=\{F^a_{\alpha}:{\alpha}<a\} \subs \mc{I}$
such that $\bigcup\mathfrak F_a=Y$.

Let ${\kappa}\in \operatorname{pcf} (A)$. 
Fix an ultrafilter $\mc{U}$ on $A$
such  that $\operatorname{cf}(\prod A/\mc{I})={\kappa}$ and fix an
$\le_\mc{U}$-increasing, $\le_\mc{U}$-cofinal sequence 
$\{g_{\alpha}:{\alpha}<{\kappa}\}\subs \prod A$.
For $g\in \prod A$ let
\begin{displaymath}
U(g)=\bigl\{x\in I: 
\{a\in A:x\in F^a_{g(a)}\}\in \mc{U}\bigr\}.
\end{displaymath}
In the next three claims we show that the sequence
$\{U(g_\alpha):\alpha<\kappa\}$ 
witnesses $\kappa\in \nad(\mc{I},Y)$.

\begin{claim}
$U(g)\in\mc{I}$ for each $g\in \prod A$.
\end{claim}
Indeed,
$U(g)\subset \bigcup\{F^a_{g(a)}:a\in A\}\in\mc{I}$ because 
$\mc{I}$ is $\sigma$-complete.

\begin{claim}\label{lm:inclnull}
If $g_1,g_2\in \prod A$, $g_1\le_\mc{I} g_2$ then 
$U(g_1)\subset U(g_2)$.
\end{claim}

Indeed, fix $x\in I$. Since
\begin{displaymath}
\{a\in A:x\in F^a_{g_2(a)}\}\supset
\{a\in A:x\in F^a_{g_1(a)}\}\cap \{a\in A:g_1(a)\le g_2(a)\}
\end{displaymath}
and  $ \{a\in A:g_1(a)\le g_2(a)\}\in \mc{U}$, we have that 
$\{a\in A:x\in F^a_{g_1(a)}\}\in \mc{U}$ implies
$\{a\in A:x\in F^a_{g_2(a)}\}\in \mc{U}$, i.e., if 
$x\in U(g_1)$ then $x\in U(g_2)$, too.

\begin{claim}
$\bigcup\{U(g_{\alpha}):{\alpha}<{\kappa}\}=Y$.
\end{claim}

Indeed, fix $y\in Y$. For each $a\in A$ choose $g(a)<a$ such that 
$y\in F^a_{g(a)}$.  Then $y\in U(g)$.
Pick  ${\alpha}<{\kappa}$ such that $g\le_\mc{U} g_{\alpha}$.
Then $U(g)\subs U({g_{\alpha}})$ and so $y\in U(g_{\alpha})$.

The three claims together give
that sequence $\<U(g_{\alpha}):{\alpha}<{\kappa}\>\subs \mc{I}$ really witnesses that 
${\kappa}\in \nad(\mc{I},Y)$.
\end{proof}

\begin{corollary}\label{cor:gen_pos}
If $\mc{I}\in \{\mc{B},\mc{N},\mc{M}\}$, $Y\in \mc{I}^+$, and 
$A\subs \nad(\mc{I},Y)$ is countable,  then $\pcf(A)\subs \nad(\mc{I},Y)$.  
\end{corollary}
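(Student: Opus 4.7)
The plan is straightforward: Corollary \ref{cor:gen_pos} is an immediate instantiation of Theorem \ref{tm:gen_pos} to the three concrete ideals $\mc{B}, \mc{N}, \mc{M}$. The only thing to check is that each of these ideals is $\sigma$-complete, after which the countability hypothesis on $A$ together with the conclusion of Theorem \ref{tm:gen_pos} directly yields $\pcf(A) \subs \nad(\mc{I},Y)$.

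First I would observe that $\mc{N}$ and $\mc{M}$ are by definition $\sigma$-ideals, since a countable union of null (respectively meager) sets is null (respectively meager). For $\mc{B}$ the $\sigma$-completeness is built into the definition given in the introduction: $\mc{B}$ is the $\sigma$-ideal generated by the compact subsets of $\omega^\omega$. Alternatively, using the characterization $\mc{B} = \{F\subs \br\omega;\omega;: F \text{ is } \le^*\text{-bounded}\}$, closure under countable unions follows from the well-known fact that any countable subset of $\omega^\omega$ has a $\le^*$-upper bound, so the union of countably many $\le^*$-bounded sets is again $\le^*$-bounded. There is no genuine obstacle here; the work of the corollary has already been done inside Theorem \ref{tm:gen_pos}, and the only role of the corollary is to record the relevant three specializations.
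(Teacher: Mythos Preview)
Your proposal is correct and matches the paper's approach exactly: the paper states this corollary with no proof, since it is an immediate specialization of Theorem~\ref{tm:gen_pos} once one notes that $\mc{B}$, $\mc{N}$, and $\mc{M}$ are $\sigma$-complete ideals. Your verification of $\sigma$-completeness for each of the three ideals is appropriate and fills in precisely the detail the paper leaves implicit.
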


As we will see in the next two subsection, for the ideals
$\mc{B}$ and $\mc{N}$ we can prove stronger closedness properties.

\subsection{The ideal $\mc{B}$}

If $F\subs \omega^\omega$ and $h\in \omega^\omega$ we write
$F\le^* h$ iff $f\le^* h$ for each $f\in F$.

\begin{theorem}\label{tm:b_closed}
If $A\subs \nad(\mc{B})$ is progressive and $|A|<\mathfrak h$, then $\operatorname{pcf} (A)\subs \nad(\mc{B})$.
\end{theorem}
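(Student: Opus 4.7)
The plan is to mirror the argument of Theorem~\ref{tm:gen_pos}, replacing countability of $A$ by $|A|<\mf{h}$ and exploiting two facts about $\mc B$: the ZFC inequality $\mf{h}\le\mf{b}=\add(\mc B)$, and the $\mf{h}$-distributivity of $\mc{P}(\omega)/\mathrm{fin}$.

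First I would let $\kappa\in\pcf(A)$, fix an ultrafilter $\mc{U}$ on $A$ with $\cf(\prod A/\mc{U})=\kappa$ and a $\le_{\mc{U}}$-increasing, $\le_{\mc{U}}$-cofinal sequence $\<g_\alpha:\alpha<\kappa\>\subs\prod A$; and for every $a\in A$ fix a witness $\<F^a_\beta:\beta<a\>\subs\mc{B}$ to $a\in\nad(\mc B)$, whose union $Y_a$ is $\le^*$-unbounded, together with a $\le^*$-bound $b^a_\beta\in\omega^\omega$ of $F^a_\beta$. Then, as in Theorem~\ref{tm:gen_pos}, I set
\[
U(g)=\{x\in\omega^\omega:\{a\in A:x\in F^a_{g(a)}\}\in\mc{U}\}
\]
for $g\in\prod A$ and attempt to show that $\<U(g_\alpha):\alpha<\kappa\>$ witnesses $\kappa\in\nad(\mc B)$.

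Two of the three required properties come cheaply. That $U(g)\in\mc{B}$ holds because $|A|<\mf{b}=\add(\mc{B})$, so $\bigcup_{a\in A}F^a_{g(a)}$ is a union of fewer than $\add(\mc B)$ members of $\mc B$ and hence lies in $\mc B$; since $U(g)\subs \bigcup_a F^a_{g(a)}$, also $U(g)\in\mc B$. The analog of Claim~\ref{lm:inclnull} --- that $g_1\le_{\mc U}g_2$ implies $U(g_1)\subs U(g_2)$ --- transfers verbatim, giving the chain property.

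The hard part will be showing $\bigcup_{\alpha<\kappa}U(g_\alpha)\notin\mc B$. In Theorem~\ref{tm:gen_pos} this relied on the common union $Y=\bigcup_\beta F^a_\beta$ being the same set for every $a$; here the $Y_a$'s may differ wildly. My reduction would be: given any $c\in\omega^\omega$, it suffices to produce $x\not\le^*c$ with $\{a\in A:x\in Y_a\}\in\mc U$, since then $\psi_x(a):=\min\{\beta<a:x\in F^a_\beta\}$ extends (by $0$ off the relevant $\mc U$-set) to an element of $\prod A$, and the $\le_{\mc U}$-cofinality of $\<g_\alpha\>$ delivers some $\alpha$ with $\psi_x\le_{\mc U}g_\alpha$; this forces $x\in U(g_\alpha)\setm\Phi(c)$, so $\bigcup_\alpha U(g_\alpha)\not\subs\Phi(c)$. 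The intended use of $|A|<\mf{h}$ is that each $Y_a$ together with $c$ determines a natural dense open subset of $(\mc{P}(\omega)/\mathrm{fin},\subs^*)$ --- essentially, the infinite sets $S\subs\omega$ on which some $y\in Y_a$ outruns $c$ --- and the $\mf{h}$-distributivity of $\mc{P}(\omega)/\mathrm{fin}$ allows a simultaneous refinement of all $|A|$ many of these dense open sets. Diagonalizing along the resulting common infinite set produces a single unbounded $x$ lying in $\mc U$-many $Y_a$'s. Carrying out this diagonalization precisely is the genuine technical heart of the argument; the rest is a routine translation of Theorem~\ref{tm:gen_pos}.
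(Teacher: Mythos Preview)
Your overall strategy matches the paper's: fix an ultrafilter $\mc U$ on $A$ with $\cf(\prod A/\mc U)=\kappa$, a $\le_{\mc U}$-cofinal sequence $\<g_\alpha:\alpha<\kappa\>$, and use $|A|<\mf h$ for a distributivity argument in $\mc P(\omega)/\mathrm{fin}$ at the unboundedness step. The paper, however, does not use your $U(g)$; it defines $\Bd(g)=\{h:\{a:F^a_{g(a)}\le^*h\}\in\mc U\}$ and takes as witnesses the sets $\Ke(g)=\{x:\forall h\in\Bd(g)\ x\le^*h\}$. One always has $U(g)\subs\Ke(g)$, so the paper's sets are larger, and this enlargement is exactly what makes the final step go through.

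The gap in your proposal is the last sentence. After distributivity hands you a single $E\in[\omega]^\omega$ and, for each $a$, some $y_a\in Y_a$ with $c\restr E<^*y_a\restr E$, you assert that a diagonalization produces one $x\not\le^*c$ lying in $\mc U$-many $Y_a$'s. But the $Y_a$ are arbitrary $\le^*$-unbounded subsets of $\omega^\omega$; nothing prevents them from being pairwise disjoint, and no diagonal construction can force a single function into several of them. The paper's device is to set $y(n)=c(n)+1$ for $n\in E$ and $y(n)=0$ otherwise; then $y\le^* y_a$ for every $a$, which puts $y$ into $\Ke(g)$ without ever asking that $y\in Y_a$. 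Your $U(g)$ route can be salvaged by first replacing each $F^a_\beta$ with its $\le^*$-downward closure $\Phi(b^a_\beta)$ (with $b^a_\beta$ monotone increasing), after which the very same $y$ lies in every $F^a_{g(a)}$ outright. Note also that you need monotonicity of the bounds to get density of your open sets in $\mc P(\omega)/\mathrm{fin}$: an unbounded family of non-monotone functions can restrict to a bounded family on an infinite set.
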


\begin{proof}
For each $a\in A$ fix an increasing sequence   $\mathfrak  F_a=\{F^a_{\alpha}:{\alpha}<a\}\subs \mc{B}$ with
$\cup\mf F_a\notin \mc{B}$.
We can  assume that the functions in the families  $F^a_{\alpha}$
are all monotone increasing.

Let ${\kappa}\in \operatorname{pcf} (A)$. 
Pick an ultrafilter $\mc{U}$ on $A$
such  that $\operatorname{cf}(\prod A/\mc{U})={\kappa}$ and fix an
$\le_\mc{U}$-increasing, $\le_\mc{U}$-cofinal sequence 
$\{g_{\alpha}:{\alpha}<{\kappa}\}\subs \prod A$.

For $g\in \prod A$ let
\begin{displaymath}
\Bd(g)=\big\{h\in {\omega}^{\omega}:\{a\in A:F^a_{g(a)}\le^* h\}\in
\mc{U}\big\},
\end{displaymath}
and 
\begin{displaymath}
\Ke(g)=\{x\in {\omega}^{\omega}:x\le^* h\text{ for each }h\in\Bd(g)\}.
\end{displaymath}

\begin{claim}\label{lm:incl}
For $g_1,g_2\in \prod A$, if $g_1\le_\mc{U} g_2$ then we have $\Bd(g_1)\supset \Bd(g_2)$
and $\Ke(g_1)\subset \Ke(g_2)$.
\end{claim}

\begin{proof}[Proof of the claim
]
For each $h\in {\omega}^{\omega}$,
\begin{displaymath}
\{a\in A:F^a_{g_1(a)}\le^* h\}\supset
\{a\in A:F^a_{g_2(a)}\le^* h\}\cap \{a\in A:g_1(a)\le g_2(a)\}.
\end{displaymath}
Since $ \{a\in A:g_1(a)\le g_2(a)\}\in \mc{U}$, we have that 
$\{a\in A:F^a_{g_2(a)}\le^* h\}\in \mc{U}$ implies
$\{a\in A:F^a_{g_1(a)}\le^* h\}\in \mc{U}$, i.e., if 
$h\in \Bd(g_2)$ then $h\in \Bd(g_1)$, too.

From the relation $\Bd(g_1)\supset \Bd(g_2)$ the inclusion  
$\Ke(g_1)\subset \Ke(g_2)$ is straightforward by the definition of the
operator $\Ke$.
\end{proof}

\begin{claim}\label{lm:bd}
$\Bd(g)\ne \empt$ for each $g\in \prod A$.
\end{claim}

Indeed, for each $a\in A$ let $h_a\in {\omega}^{\omega}$ such that 
$F^a_{g(a)}\le^* h_{\alpha}$. Since $|A|< \mf h\le \mathfrak b$ there is 
$h\in {\omega}^{\omega}$ such that $h_a\le^*h$ for each $a\in A$.
Then $h\in \Bd(g)$. 

\smallskip

\begin{claim}
The sequence $\mathfrak F=\<\Ke(g_{\alpha}):\alpha<\kappa\>$  witnesses that 
${\kappa}\in \nad (\mc{B})$.
\end{claim}

By claim \ref{lm:incl}, we have ${\Ke(g_{\alpha})}\subs {\Ke(g_{\beta})}$ for ${\alpha}<{\beta}<\kappa$,
and  each ${\Ke(g_{\alpha})}$ is in $\mc{B} $ by  claim \ref{lm:bd}.

So all we  need is to show that $F=\bigcup\{{\Ke(g_{\alpha})}:{\alpha}<{\kappa}\}\notin\mc{B}$, i.e. 
$F$ is not $\le^*$-bounded.
Let $x\in {\omega}^{\omega}$ be arbitrary. 
We will find $y\in F$ such that $y\not\le^* x$.

For each $a\in A$ let $F^a=\cup\{F^a_\alpha:\alpha<\}$, and put 
\begin{displaymath}
\mc{J}(a)=\{E\subs {\omega}:\exists f\in F^a\
x\restr E <^*f\restr E\}.
\end{displaymath}
Since the functions in $F^a$ are all monotone increasing and 
$F^a$ is unbounded in $\<\omega^\omega,\le^*\>$, for each 
$B\in \br \omega;\omega;$ the family 
$\{f\restriction B:f\in F^a\}$ is unbounded in $\<\omega^B,\le^*\>$,
so $B$ contains some element of $\mc{J}(a)$. In other words,
$\mc{J}(a)$ is  dense in
$\<{\omega}^{\omega},\subset^*\>$.
Since  every $\mc{J}(a)$ is clearly open and   $|A|<\mathfrak h$, 
\begin{displaymath}
\mc{J}={\bigcap}\{\mc{J}(a):a\in A\}
\end{displaymath}
 is also dense in $\<{\omega}^{\omega},\subset^*\>$.
Fix an  arbitrary $E\in \mc{J}$. For each $a\in A$ 
pick $f^a\in F^a$ which witnesses that $E\in \mc{J}(a)$, i.e.
$x\restriction E <^* f^a$.
Choose  $g(a)<a$ with $f^a\in F^a_{g(a)}$. 

Define the function $y\in {\omega}^{\omega}$ as follows:
\begin{displaymath}
y(n)=\left\{\begin{array}{ll}
x(n)+1&\text{if $n\in E$,}\\
0&\text{otherwise.}
\end{array}\right.
\end{displaymath}

Then $y\le^* f^a\in F^a_{g(a)}$ and so if $F^a_{g(a)}\le^* h$ then 
$y\le^* h$. Thus $y\in \Ke(g)$.
Fix ${\alpha}<{\kappa}$ such that $g\le_\mc{U} g_{\alpha}$.
By lemma \ref{lm:incl}, $\Ke (g)\subs \Ke(g_{\alpha})$, hence
$y\in {\Ke(g_{\alpha})}\subs F$ and clearly $y\not\le^* x$
and so $F\not\le^* x$.
Since $x$ was an arbitrary elements of ${\omega}^{\omega}$, we are done.
\end{proof}

\subsection{The ideal $\mc{N}$}

\begin{theorem}\label{tm:n_closed}
If $A\subs \nad(\mc{N})\cap{Reg}$ is countable,
then $\pcf (A)\subs \nad(\mc{N})$.
\end{theorem}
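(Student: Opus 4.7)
The plan is to reduce the statement to Corollary \ref{cor:gen_pos}, which says that $\nad(\mc{N},Y)$ is $\pcf$-closed on countable subsets for any fixed $Y\in \mc{N}^+$. The only gap between that and Theorem \ref{tm:n_closed} is that a witness to $a\in \nad(\mc{N})$ only produces some non-null $Y_a$ depending on $a$, while the corollary needs a single $Y$ common to all $a\in A$. So the main task is to show that such a single $Y$ is always available---most conveniently, $Y=2^\omega$ itself.

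Concretely, I would establish the lemma
\[
  \nad(\mc{N})=\nad(\mc{N},2^\omega).
\]
For the non-trivial inclusion, fix $\kappa\in \nad(\mc{N})$ witnessed by an increasing chain $\{F_\alpha:\alpha<\kappa\}\subs \mc{N}$ with non-null union $Y$. Using the translation-invariance of Haar (Lebesgue) measure on the compact group $2^\omega$, I would pick countably many translates $\{t_n:n<\omega\}\subs 2^\omega$ and one null set $N$ so that
\[
  N\cup \bigcup_{n<\omega}(Y+t_n)=2^\omega.
\]
The standard way to see this is to take a measurable hull $Y^*\supseteq Y$ with $\mu(Y^*)=\mu^*(Y)>0$, use the Lebesgue density theorem to find countably many translates of $Y^*$ covering $2^\omega$ up to a null set, and then absorb the set $Y^*\setminus Y$ (which has outer measure zero) together with the remaining null set into $N$. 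Setting $G_\alpha:=N\cup \bigcup_n (F_\alpha+t_n)$ then yields an increasing chain of null sets whose union is $2^\omega$, so $\kappa\in \nad(\mc{N},2^\omega)$.

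With the lemma in hand the conclusion is immediate: any countable $A\subs \nad(\mc{N})$ is actually contained in $\nad(\mc{N},2^\omega)$, and Corollary \ref{cor:gen_pos} then gives $\pcf(A)\subs \nad(\mc{N},2^\omega)\subs \nad(\mc{N})$. The only real obstacle is the covering lemma above, whose proof uses the translation-invariance of Lebesgue measure in an essential way; it is precisely this feature, absent in the bounded ideal $\mc{B}$, that removes the need for any assumption like $|A|<\mathfrak{h}$ and allows the cleaner argument here as opposed to Theorem \ref{tm:b_closed}.
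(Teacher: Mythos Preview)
Your key lemma $\nad(\mc{N})=\nad(\mc{N},2^\omega)$ is false in general, so the reduction to Corollary~\ref{cor:gen_pos} via $Y=2^\omega$ cannot work. Observe that any $\kappa\in\nad(\mc{N},2^\omega)$ exhibits $2^\omega$ as an increasing union of $\kappa$ null sets, hence $\kappa\ge\operatorname{cov}(\mc{N})$; on the other hand $\add(\mc{N})=\min\nad(\mc{N})$ always. Thus in any model with $\add(\mc{N})<\operatorname{cov}(\mc{N})$ (e.g.\ the Cohen model) one has $\add(\mc{N})\in\nad(\mc{N})\setminus\nad(\mc{N},2^\omega)$. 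The concrete slip in your argument is the parenthetical ``$Y^*\setminus Y$ (which has outer measure zero)'': a measurable hull $Y^*\supseteq Y$ guarantees only that $Y^*\setminus Y$ has \emph{inner} measure zero. If $Y$ is non-measurable this difference set may have full outer measure, and then $2^\omega\setminus\bigcup_n(Y+t_n)$ need not be contained in any null set $N$.

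What the paper does instead is exactly calibrated to avoid this: from each $\kappa_n\in A$ one gets, by the translation argument you sketched, a set $X_n\subseteq 2^\omega$ of full \emph{outer} measure with $\kappa_n\in\nad(\mc{N},X_n)$ (Lemma~\ref{lm:eto1}); one does \emph{not} attempt to reach $X_n=2^\omega$. The single common witness is then manufactured as the product $X^*=\prod_n X_n\subseteq(2^\omega)^\omega$, which still has full outer measure for the product measure $\lambda_\omega$ (Lemma~\ref{lm:product}), and satisfies $A\subseteq\nad(\mc{N}_\omega,X^*)$. Corollary~\ref{cor:gen_pos} applied in $(2^\omega)^\omega$ and the measure isomorphism $(2^\omega)^\omega\cong 2^\omega$ (Fact~\ref{fa:pr}) finish the proof. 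The product step is precisely the device that replaces your unavailable lemma.
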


To prove the theorem above we need some preparation.
Denote $\lambda$ the product measure on $2^\omega$, and
$\lambda_\omega$ the product measure of countable many copies of $\<2^\omega,\lambda\>$.
By \cite[417J]{Fr4} the products of measures are associative.
Since $\omega\times\omega=\omega$, and   $\<2^\omega,\lambda\>$ itself is the product of 
countable many copy of a measure space on 2 elements,     we have the following fact.
\begin{fact}\label{fa:pr}
There is a bijection $f:2^\omega\to (2^\omega)^\omega$ such that 
$\lambda(X)=\lambda_\omega(f[X])$ for each $\lambda$-measurable set $X\subs 2^\omega$. 
So
\begin{equation}\tag{$\dag$}
\label{eq:add}  \nad(\mc{N})=\nad(\mc{N}_\omega), 
\end{equation}
where $\mc{N}_\omega=\{X\subs (2^\omega)^\omega:\lambda_\omega(X)=0\}$.
\end{fact}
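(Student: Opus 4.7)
The plan is to produce the bijection $f$ explicitly from a reindexing of the base index set $\omega$, verify measure preservation on a generating $\pi$-system of cylinders, and then deduce the spectrum equality \eqref{eq:add} as a purely formal consequence of the fact that $f$ is a set-theoretic bijection transporting the null ideal onto the null ideal.

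First, I would fix any bijection $\sigma:\omega\to \omega\times\omega$ and define $f:2^\omega\to (2^\omega)^\omega$ by the formula $f(x)(m)(n)=x(\sigma^{-1}(m,n))$. This is manifestly a set-theoretic bijection, being the dualization of the bijection $\sigma$ on index sets, with inverse given by $f^{-1}(y)(k)=y(\pi_1(\sigma(k)))(\pi_2(\sigma(k)))$. Next, I would verify that $f$ is measure-preserving on basic cylinders. A standard cylinder in $2^\omega$ has the form $C=\{x\in 2^\omega:x(k_i)=\epsilon_i\text{ for }i<\ell\}$ with measure $2^{-\ell}$; its image $f[C]=\{y\in(2^\omega)^\omega:y(m_i)(n_i)=\epsilon_i\text{ for }i<\ell\}$, where $(m_i,n_i)=\sigma(k_i)$, is a basic cylinder of $(2^\omega)^\omega$ touching only finitely many factors at finitely many coordinates, and its $\lambda_\omega$-measure equals $2^{-\ell}$ by definition of the product measure. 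Since basic cylinders form a $\pi$-system generating the Borel $\sigma$-algebra on each side and both $\lambda$ and $f_*\lambda_\omega$ are probability measures agreeing on this $\pi$-system, the standard $\pi$-$\lambda$ theorem (equivalently, the uniqueness part of Carath\'eodory) forces them to agree on all Borel sets. Completion then extends the equality to Lebesgue measurable sets, and in particular $f$ sends $\lambda$-measurable sets bijectively to $\lambda_\omega$-measurable sets while preserving measure; this is exactly the outcome predicted by Fremlin 417J under the identification $\omega=\omega\times\omega$.

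For the consequence $\nad(\mc{N})=\nad(\mc{N}_\omega)$, I would simply observe that the restriction of $f$ to the collection of null sets is an inclusion-preserving bijection $f:\mc{N}\to \mc{N}_\omega$ that commutes with arbitrary unions (as $f$ is a bijection of underlying sets) and sends $\mc{N}^+$ bijectively onto $\mc{N}_\omega^+$. Hence any increasing $\kappa$-chain $\{A_\alpha:\alpha<\kappa\}\subs \mc{N}$ with $\bigcup_{\alpha<\kappa}A_\alpha\notin\mc{N}$ transfers to the increasing $\kappa$-chain $\{f[A_\alpha]:\alpha<\kappa\}\subs \mc{N}_\omega$ with $\bigcup_{\alpha<\kappa}f[A_\alpha]=f\bigl[\bigcup_{\alpha<\kappa}A_\alpha\bigr]\notin\mc{N}_\omega$, and symmetrically for $f^{-1}$.

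The only subtle point worth flagging is that the argument requires an honest bijection of the underlying point sets with the measure-preserving property, not merely an isomorphism of measure algebras, since the definition of $\nad(\mc{I})$ refers to literal set-theoretic increasing chains whose union is computed pointwise. The explicit $\sigma$-based construction of $f$ supplies exactly such a bijection, so no further obstacle arises.
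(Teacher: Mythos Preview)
Your proposal is correct and follows essentially the same approach as the paper: the paper simply invokes associativity of product measures (Fremlin 417J) together with the identification $\omega=\omega\times\omega$, and you have spelled out exactly this argument by exhibiting the reindexing bijection explicitly and verifying measure preservation on cylinders. Your additional remark that a genuine pointwise bijection (not merely a measure-algebra isomorphism) is needed for the transfer of $\nad$ is a valid and useful clarification.
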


Denote $\lambda^*$  the outer measure on $2^\omega$.
Clearly for some $X\subs 2^\omega$ we have $\lambda^*(X)>0$ iff $X\notin \mc{N}$.

As we will see soon, Theorem \ref{tm:n_closed} follows easily from the next result.

\begin{theorem}
\label{lm:merge}
If $A\subs \nad(\mc{N})$ is countable then there is 
$Y\subs \Int$ such that ${\lambda}^*(Y)=1$ and 
$A\subs  \nad(\mc{N},Y)$.
\end{theorem}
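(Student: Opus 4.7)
The plan is to reduce to the product space $((2^\omega)^\omega,\lambda_\omega)$ via Fact \ref{fa:pr}, devote one coordinate to each element of $A$, and take $\bar Y=\prod_n Y_n$, where each $Y_n\subseteq 2^\omega$ is a full-outer-measure \emph{inflation} of $S_n:=\bigcup_\alpha F^n_\alpha$.  Enumerate $A=\{a_n:n<\omega\}$ and fix for each $n$ the given increasing chain $\{F^n_\alpha:\alpha<a_n\}\subseteq\mc{N}$ with $S_n\in\mc{N}^+$.

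First I would perform the inflation.  Let $M_n$ be a measurable hull of $S_n$, so $\lambda(M_n)=\lambda^*(S_n)>0$ and $S_n$ has full outer measure in $M_n$; replacing $F^n_\alpha$ by $F^n_\alpha\cap M_n$ one may assume $F^n_\alpha\subseteq M_n$.  Using homogeneity of the Lebesgue measure algebra, partition $2^\omega\setminus M_n$ into countably many measurable pieces $P^n_k$ of measure $\le\lambda(M_n)$, and pick measurable $P'^n_k\subseteq M_n$ of the same measure together with measure-preserving isomorphisms $\phi^n_k:P'^n_k\to P^n_k$.  Define
\begin{displaymath}
Y_n:=S_n\cup\bigcup_k \phi^n_k(S_n\cap P'^n_k),\quad G^n_\alpha:=F^n_\alpha\cup\bigcup_k \phi^n_k(F^n_\alpha\cap P'^n_k).
\end{displaymath}
Since $S_n\cap P'^n_k$ has full outer measure in $P'^n_k$ and $\phi^n_k$ preserves measure, every measurable superset of $Y_n$ is forced to have full measure on each piece of the partition $\{M_n\}\cup\{P^n_k\}$, giving $\lambda^*(Y_n)=1$; meanwhile the $G^n_\alpha$ form an increasing chain of null sets with union $Y_n$.

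Next I would pass to $(2^\omega)^\omega$ via Fact \ref{fa:pr} and set $\bar Y:=\prod_n Y_n=\bigcap_n \pi_n^{-1}(Y_n)$, together with $\bar G^n_\alpha:=\{x\in\bar Y:x(n)\in G^n_\alpha\}$ for $\alpha<a_n$.  Each $\bar G^n_\alpha$ is contained in the null set $\pi_n^{-1}(G^n_\alpha)$, the chain is increasing, and $\bigcup_\alpha \bar G^n_\alpha=\bar Y$ because $x\in\bar Y$ forces $x(n)\in Y_n=\bigcup_\alpha G^n_\alpha$.  Transporting back through $f^{-1}$ from Fact \ref{fa:pr} gives $Y:=f^{-1}[\bar Y]\subseteq 2^\omega$; since $f$ is a bijection preserving the measurable $\sigma$-algebra together with $\lambda$, outer measures transfer, so $\lambda^*(Y)=\lambda_\omega^*(\bar Y)$.

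The main obstacle is the auxiliary product lemma: a countable product of full-outer-measure subsets of $2^\omega$ still has full outer measure in $(2^\omega)^\omega$.  The approach is Fubini: for a measurable $B\supseteq\bar Y$ one has $\lambda_\omega(B)=\int \lambda_{\omega\setminus\{0\}}(B_t)\,d\lambda(t)$, and for $t\in Y_0$ the slice $B_t$ contains $\prod_{n\ge 1}Y_n\subseteq (2^\omega)^{\omega\setminus\{0\}}$, so by ``induction on the tail of coordinates'' $\lambda(B_t)=1$; then the measurable set $\{t:\lambda(B_t)=1\}$ contains $Y_0$, has measure $1$, and $\lambda_\omega(B)=1$ follows.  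The delicate technical point is making this infinite induction on coordinates rigorous — one way is to prove the product lemma first for finitely many factors and then to observe that $\bar Y^c=\bigcup_n\pi_n^{-1}(Y_n^c)$ is a countable union of sets of the form $\pi_n^{-1}(B)$ with $B$ of inner measure $0$ in $2^\omega$, and that each such set (hence the union) has inner measure $0$ in $(2^\omega)^\omega$ by a direct Fubini computation on the relevant coordinate.
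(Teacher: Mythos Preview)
Your overall architecture matches the paper's: inflate each witness to full outer measure, form the product $\bar Y=\prod_n Y_n$ in $(2^\omega)^\omega$, show $\lambda_\omega^*(\bar Y)=1$, and pull back via Fact~\ref{fa:pr}. For the inflation the paper uses translates (Claims~\ref{f:incre}--\ref{f:max2}, Lemma~\ref{lm:eto1}) rather than measure-algebra isomorphisms; translates are simpler because they are automatically measure-preserving bijections of $2^\omega$, so outer measure of non-measurable sets transfers without further argument, whereas your $\phi^n_k$ must be genuine bimeasurable point maps (not merely Boolean isomorphisms) for that step to go through.

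The genuine gap is in your product lemma. Your Fubini recursion on the leading coordinate is, as you say, non-terminating; but your proposed fix is wrong as stated: a countable union of sets of inner measure $0$ need \emph{not} have inner measure $0$ (take any $E\subseteq 2^\omega$ with $\lambda_*(E)=\lambda_*(E^c)=0$; then $E\cup E^c=2^\omega$). Fubini does show each $\pi_n^{-1}(Y_n^c)$---and, by finite induction, each $\bigcup_{n<N}\pi_n^{-1}(Y_n^c)$---has inner measure $0$, but there is no passage to the infinite union from that alone. The paper's Lemma~\ref{lm:product} supplies the missing idea: given compact $K\subseteq (2^\omega)^\omega$ of positive measure, build $y_n\in Y_n$ inductively so that the slice $\{z:\langle y_0,\dots,y_{n-1}\rangle^\frown z\in K\}$ keeps positive measure (Fubini together with $\lambda^*(Y_n)=1$ guarantees such $y_n$ exist), and then compactness of $K$ places $\langle y_n:n<\omega\rangle$ in $K\cap\bar Y$. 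This is precisely how your ``induction on the tail of coordinates'' is made rigorous---by an $\omega$-length construction of a \emph{point} followed by one appeal to compactness, rather than an infinite descent on the statement.
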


\begin{proof}[Proof of theorem \ref{tm:n_closed} from Theorem \ref{lm:merge}]
By Theorem \ref{lm:merge} there is $Y\subs \Int$ such that 
$A\subs \nad(\mc{N}, Y)$ and ${\lambda}^*(Y)=1$.
Now apply theorem \ref{tm:gen_pos} for $Y$ and $A$ to obtain 
$\pcf(A)\subs \nad(\mc{N}, Y)\subs \nad(\mc N)$.
\end{proof}

\begin{proof}[Proof of Theorem \ref{lm:merge}]
First we prove some easy claims.

\begin{claim}\label{f:incre}
If $X\subs \Int$ is measurable, $1>{\lambda}(X)>0$ then there is 
$x\in \Int$ such that ${\lambda}(X\cup (X+x))>{\lambda}(X)$,
where $X+x=\{x'+x:x'\in X\}$.
\end{claim}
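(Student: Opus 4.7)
The plan is to exploit translation-invariance of $\lambda$ together with Fubini's theorem to produce a translate of $X$ whose intersection with the complement $Y := 2^\omega \setminus X$ has positive measure. Since $\lambda(X) < 1$ we have $\lambda(Y) = 1 - \lambda(X) > 0$, so it suffices to find $x \in 2^\omega$ with $\lambda(Y \cap (X+x)) > 0$: for such an $x$ the set $(X+x) \setminus X$ has positive measure, and hence
\[
\lambda(X \cup (X+x)) = \lambda(X) + \lambda((X+x) \setminus X) > \lambda(X).
\]

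To produce such an $x$, I would average $\lambda(Y \cap (X+x))$ as $x$ ranges over $2^\omega$. Using that $z \in X+x$ iff $z+x \in X$ (here $-x = x$, since $2^\omega$ has exponent $2$ under coordinatewise XOR), Fubini's theorem yields
\[
\int_{2^\omega} \lambda(Y \cap (X+x)) \, d\lambda(x) = \int \chi_Y(z) \left( \int \chi_X(z+x) \, d\lambda(x) \right) d\lambda(z),
\]
and translation-invariance of $\lambda$ collapses the inner integral to the constant $\lambda(X)$. The total therefore equals $\lambda(X)\lambda(Y) > 0$, forcing some $x \in 2^\omega$ to satisfy $\lambda(Y \cap (X+x)) > 0$, as required.

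The only real work is verifying that the Fubini step is legitimate, which is standard: $(2^\omega,+,\lambda)$ is a compact metrizable abelian group with $\lambda$ its Haar measure, so translations preserve measurability and measure, and $(x,z) \mapsto \chi_Y(z)\chi_X(z+x)$ is jointly measurable. I do not anticipate a genuine obstacle beyond this routine bookkeeping.
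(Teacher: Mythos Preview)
Your argument is correct. The averaging/Fubini computation goes through exactly as you describe: with $Y=2^\omega\setminus X$ one has
\[
\int_{2^\omega}\lambda\bigl(Y\cap(X+x)\bigr)\,d\lambda(x)
=\int_{2^\omega}\chi_Y(z)\Bigl(\int_{2^\omega}\chi_X(z+x)\,d\lambda(x)\Bigr)d\lambda(z)
=\lambda(X)\lambda(Y)>0,
\]
using that $(x,z)\mapsto z+x$ is continuous (hence the integrand is jointly measurable) and that $\lambda$ is the Haar measure on the compact group $(2^\omega,+)$. This forces $\lambda\bigl((X+x)\setminus X\bigr)>0$ for some $x$, which is what is needed.

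The paper, however, proves the claim by a different and more local route: it invokes the Lebesgue density theorem to find a point $y$ near which $X$ has density exceeding $1/2$ and a point $z$ near which $X$ has density below $1/2$, and then shows directly that the translate by $x=z-y$ pushes extra mass into a neighborhood of $z$. Your convolution argument is arguably cleaner and avoids any appeal to density points; on the other hand, the paper's approach is more elementary in that it does not require setting up the product-measure framework or checking joint measurability, and it gives a concrete witness $x$ rather than an existential one coming from an integral inequality. Both proofs are short and standard; they simply reflect the two classical ways of exploiting translation invariance.
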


\begin{proof}[Proof of the claim]
By the Lebesgue density theorem there are $y,z\in \Int$ and $\varepsilon >0$ such that 
for each $0<{\delta}<\varepsilon$ we have
${\lambda}(X\cap [y-{\delta},y+{\delta}])>{\delta}$ and 
${\lambda}(X\cap [z-{\delta},z+{\delta}])<{\delta}$.
Let $x=z-y$. Then 
${\lambda}((X\cup (X+x))\cap [z-{\delta},z+{\delta}])\ge
{\lambda}(X\cap [y-{\delta},y+{\delta}])>{\delta}>
{\lambda}(X\cap [z-{\delta},z+{\delta}])
$. So ${\lambda}(X\cup (X+x))>{\lambda}(X)$.
\end{proof}

\begin{claim}\label{f:max}
If $X\subs \Int$  is Lebesgue-measurable, ${\lambda}(X)>0$ then there is  a set 
$\{x_n:n<{\omega}\}\subs 2^\omega$
 such that ${\lambda}(\bigcup\{X+x_n:n\in {\omega}\})=1$.
\end{claim}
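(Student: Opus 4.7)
The plan is a standard maximality argument that turns Claim \ref{f:incre} into a pigeonhole device. I would set
\[
s=\sup\Bigl\{\lambda\Bigl(\bigcup_{n<\omega}(X+x_n)\Bigr):\{x_n:n<\omega\}\subseteq \Int\Bigr\}
\]
and first verify that this supremum is actually attained by a single countable sequence. For each $k<\omega$, pick a sequence $\{x^k_n:n<\omega\}\subseteq \Int$ whose associated union has measure at least $s-2^{-k}$, and enumerate the countable set $\{x^k_n:k,n<\omega\}$ as $\{y_n:n<\omega\}$. Then $\bigcup_n(X+y_n)\supseteq \bigcup_n(X+x^k_n)$ for every $k$, so $\lambda\bigl(\bigcup_n(X+y_n)\bigr)\ge s-2^{-k}$ for each $k$, hence equals $s$.

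Next I would show $s=1$. Suppose, for contradiction, that $s<1$, and set $Y=\bigcup_n(X+y_n)$. Then $0<\lambda(X)\le\lambda(Y)=s<1$, so the hypotheses of Claim \ref{f:incre} are satisfied by $Y$, producing some $x\in\Int$ with $\lambda(Y\cup(Y+x))>\lambda(Y)=s$. Because $\Int$ is a group under coordinatewise addition and translation distributes over unions,
\[
Y+x=\bigcup_{n<\omega}\bigl(X+(y_n+x)\bigr),
\]
and therefore $Y\cup(Y+x)=\bigcup_n(X+y_n)\cup\bigcup_n\bigl(X+(y_n+x)\bigr)$ is itself a countable union of translates of $X$ having measure strictly greater than $s$, contradicting the choice of $s$. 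Hence $s=1$ and the sequence $\{y_n:n<\omega\}$ is the required set of translates.

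I do not anticipate any genuine obstacle: the argument is essentially Claim \ref{f:incre} plus one countable bookkeeping trick. The only point that demands a moment of care is confirming that the supremum is realized, which works precisely because a countable union of countable sets is countable; without this we would merely have measure approaching, rather than equal to, $1$, and the contradiction step would fail.
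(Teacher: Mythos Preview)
Your argument is correct. Both your proof and the paper's hinge on the same engine, Claim~\ref{f:incre}, and both are maximality arguments, but the packaging differs. The paper proceeds recursively: repeatedly apply Claim~\ref{f:incre} to enlarge the current union of translates, observing that the strictly increasing sequence of measures so produced can have only countably many terms, so the process halts at a countable stage with measure $1$. You instead take the supremum $s$ over all countable families of translates, show it is attained by amalgamating countably many near-optimal sequences, and then derive a contradiction from $s<1$ via a single application of Claim~\ref{f:incre}. Your version has the virtue of making explicit the one point the paper's two-line sketch leaves to the reader---namely, why a countable collection of translates actually realizes the maximum rather than merely approaching it---and it sidesteps any need to think about transfinite iteration through limit stages.
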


\begin{proof}[Proof of the claim]
Apply claim \ref{f:incre} until you can increase the measure.
We should stop after  countable many steps.
\end{proof}

\begin{claim}\label{f:max2}
If $Y\subs \Int$,  ${\lambda}^*(Y)>0$ then there are real numbers
$\{x_n:n<{\omega}\}$ such that ${\lambda}^*(\bigcup\{Y+x_n:n\in {\omega}\})=1$.
\end{claim}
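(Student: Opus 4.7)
The plan is to reduce Claim \ref{f:max2} to the already-established measurable case (Claim \ref{f:max}) by passing through a measurable hull of $Y$. Since $\lambda$ is a finite Borel-regular measure on $\Int$, one can choose a measurable set $X\supseteq Y$ with $\lambda(X)=\lambda^*(Y)>0$. Such an $X$ has the crucial property that every measurable superset $N$ of $Y$ satisfies $\lambda(X\setm N)=0$: otherwise $X\cap N$ would be a measurable set containing $Y$ whose measure is strictly below $\lambda^*(Y)$, contradicting the definition of the outer measure. Now apply Claim \ref{f:max} to $X$ to obtain $\{x_n:n<\omega\}\subs \Int$ with $\lambda\bigl(\bigcup_{n<\omega}(X+x_n)\bigr)=1$, and I claim this same sequence witnesses the conclusion for $Y$.

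To verify $\lambda^*\bigl(\bigcup_{n<\omega}(Y+x_n)\bigr)=1$, let $M$ be an arbitrary measurable set containing $\bigcup_{n<\omega}(Y+x_n)$; I will show $\lambda(M)=1$. For each $n$ the translate $M-x_n$ is measurable and contains $Y$, so the measurable-hull property applied to the pair $(X,Y)$ gives $\lambda\bigl(X\setm (M-x_n)\bigr)=0$. Translation invariance of $\lambda$ rewrites this as $\lambda\bigl((X+x_n)\setm M\bigr)=0$, and countable subadditivity then yields $\lambda\bigl(\bigcup_{n<\omega}(X+x_n)\setm M\bigr)=0$. Combined with $\lambda\bigl(\bigcup_{n<\omega}(X+x_n)\bigr)=1$ this forces $\lambda(M)=1$. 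Since $M$ was an arbitrary measurable cover of $\bigcup_{n<\omega}(Y+x_n)$, the outer-measure statement follows.

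There is no genuine obstacle here. The only subtlety is that translation is a measure-preserving bijection on measurable sets but behaves less well on arbitrary sets, which is exactly why the argument goes through the measurable hull $X$ and the identity $\lambda\bigl((X+x_n)\setm M\bigr)=\lambda\bigl(X\setm (M-x_n)\bigr)$, rather than attempting to manipulate $Y$ or $\lambda^*(Y+x_n)$ directly.
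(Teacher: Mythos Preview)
Your proof is correct and follows essentially the same route as the paper: pass to a measurable hull of $Y$, apply Claim~\ref{f:max} to that hull, and then verify that the resulting translates witness the conclusion for $Y$. The only cosmetic difference is in the final verification --- the paper shows that every measurable subset of $\bigcup_n(X+x_n)$ of positive measure meets $\bigcup_n(Y+x_n)$, whereas you argue the dual statement that every measurable cover of $\bigcup_n(Y+x_n)$ has full measure.
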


\begin{proof}[Proof of the claim]
Fix a Lebesgue measurable set $Y$ such that $X\subs Y$ and for
each measurable set $Z$ with $Z\subs Y\setm X$ we have ${\lambda}(Z)=0$.
Apply claim \ref{f:max} for $Y$: we obtain a set
$\{x_n:n<{\omega}\}\subs \Int$ such that taking 
$Y^*=\bigcup\{Y+x_n:n<{\omega}\}$ we have ${\lambda}(Y^*)=1$.
Let $X^*=\bigcup\{X+x_n:n<{\omega}\}$.
Then ${\lambda}(X^*)=1$. Indeed, if 
$Z\subs Y^*
$ is measurable with  ${\lambda}(Z)>0$ then 
there is $n$ such that ${\lambda}(Z\cap (Y+x_n))>0$.
Let $T=(Z-x_n)\cap Y$. Then $T\subs Y$ is measurable with ${\lambda}(T)>0$, so 
there is $t \in T\cap X$. Then $t+x_n\in Z\cap X^*$, i.e.  $Z\not\subs  Y^*\setm X^*$.
\end{proof}

\begin{lemma}\label{lm:eto1}
If $0<\lambda^*(X)$  then there is 
${X^*}\subs \Int$ such that ${\lambda}^*({X^*})=1$ and $\nad(\mc{N},{X^*})=\nad(\mc{N}, X)$.
\end{lemma}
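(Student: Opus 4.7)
My plan is to let $X^*$ be precisely the set produced by Claim \ref{f:max2}. That is, using $\lambda^*(X) > 0$, Claim \ref{f:max2} supplies a countable set $\{x_n : n < \omega\} \subseteq 2^\omega$ with
\begin{displaymath}
\lambda^*\Bigl(\bigcup_{n<\omega}(X+x_n)\Bigr) = 1,
\end{displaymath}
and I would take $X^* = \bigcup_{n<\omega}(X+x_n)$. The content of the lemma is then to transfer additivity witnesses between $X$ and its saturated hull $X^*$, in both directions.

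For the inclusion $\nad(\mc{N}, X) \subseteq \nad(\mc{N}, X^*)$, given $\kappa \in \nad(\mc{N}, X)$ witnessed by an increasing chain $\{A_\alpha : \alpha < \kappa\} \subseteq \mc{N}$ with $\bigcup_\alpha A_\alpha = X$, I would set $B_\alpha = \bigcup_{n<\omega}(A_\alpha + x_n)$. Each $B_\alpha$ is null because translations preserve null sets and $\mc{N}$ is a $\sigma$-ideal; the chain $\{B_\alpha\}$ is $\subseteq$-increasing in $\alpha$; and
\begin{displaymath}
\bigcup_{\alpha<\kappa} B_\alpha \;=\; \bigcup_{n<\omega}\Bigl(\bigcup_{\alpha<\kappa} A_\alpha + x_n\Bigr) \;=\; \bigcup_{n<\omega}(X+x_n) \;=\; X^*.
\end{displaymath}
For the reverse inclusion $\nad(\mc{N}, X^*) \subseteq \nad(\mc{N}, X)$, given an increasing chain $\{B_\alpha : \alpha < \kappa\} \subseteq \mc{N}$ with union $X^*$, I would set $A_\alpha = B_\alpha \cap X$; these are null subsets of $X$, $\subseteq$-increasing, with $\bigcup_\alpha A_\alpha = X^* \cap X = X$.

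In both directions the only subtlety is ensuring the resulting chain genuinely witnesses the cofinality $\kappa$, rather than collapsing to an eventually constant chain of length less than $\kappa$. This is automatic from regularity of $\kappa$ together with the fact that the union of the constructed chain is $X$ (resp. $X^*$), which lies outside $\mc{N}$: a chain of null sets that is eventually constant would have its union in $\mc{N}$, contradiction; so a strictly increasing cofinal subsequence of order type $\kappa$ can be extracted. Consequently there is no serious obstacle here — the main work was already done in Claim \ref{f:max2}, and the verification reduces to the trivial facts that $\mc{N}$ is closed under translations, countable unions, and taking subsets.
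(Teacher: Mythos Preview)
Your proof is correct and follows exactly the paper's approach: take $X^* = \bigcup_n (X+x_n)$ via Claim~\ref{f:max2}, push witnesses forward by $B_\alpha = \bigcup_n(A_\alpha+x_n)$, and pull them back by $A_\alpha = B_\alpha \cap X$. (The identity $X^*\cap X = X$ tacitly uses $X\subseteq X^*$, which is arranged by including $x_0=0$ among the translates; the paper makes the same implicit assumption, and your worry about the chain ``collapsing'' is unnecessary since the definition of $\nad(\mc{N},Y)$ only requires a weakly increasing chain with union $Y\in\mc{N}^+$.)
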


\begin{proof}
Fix $\{x_n:n<{\omega}\}\subs \Int$ such that ${\lambda}({X^*})=1$, 
where ${X^*}=\bigcup\{X+x_n:n<{\omega}\}$.
If ${\kappa}\in\nad(\mc{N}, X)$ then there is a sequence
$ \<I_{\nu}:{\nu}<{\kappa}\>\subs \mc{N}$ such that
$\bigcup_{{\zeta}<{\nu}}I_{\zeta}\in \mc{N}$ 
for each ${\nu}<{\kappa}$ and
$\bigcup_{{\zeta}<{\kappa}}I_{\nu}=X$.
Let $J_{\nu}=\bigcup\{I_{\nu}+x_n:n<{\omega}\}$.
Then the sequence $\<J_{\nu}:{\nu}<{\kappa}\>$ witnesses 
${\kappa}\in \nad(\mc{N}, {X^*})$.

If $\<J_{\nu}:{\nu}<{\kappa}\>$ witnesses that ${\kappa}\in \nad(\mc{N}, {X^*})$
then $I_{\nu}=J_{\nu}\cap X$ witnesses that ${\kappa}\in \nad(\mc{N}, X)$.
\end{proof}

Denote $\lambda_\omega^*$ the outer measure on $(2^\omega)^\omega$.

\begin{lemma}\label{lm:product}
If $\{Y_n:n<\omega\}\subs \mc{P}(2^\omega)$ with 
$\lambda^*(Y_n)=1$  then $\lambda_\omega^*(\prod Y_n)=1$.
\end{lemma}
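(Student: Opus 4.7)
My plan is to reformulate the statement by complementation and then exploit that $\lambda_\omega$ is a Borel probability measure on the compact Polish space $(2^\omega)^\omega$, hence inner regular by compact sets. Assume for contradiction that $\lambda_\omega^*(\prod_n Y_n)<1$; then there is a measurable $Z\supseteq \prod_n Y_n$ with $\lambda_\omega(Z)<1$, so $W=(2^\omega)^\omega\setm Z$ is measurable with $\lambda_\omega(W)>0$ and $W\cap \prod_n Y_n=\empt$. By inner regularity I fix a compact $K\subs W$ with $\lambda_\omega(K)>0$; we still have $K\cap \prod_n Y_n=\empt$.

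Next I would construct by induction on $n$ points $y_n\in Y_n$ and compact sets $K=K_0,K_1,K_2,\dots\subs (2^\omega)^\omega$ with $\lambda_\omega(K_n)>0$, where
\begin{displaymath}
K_{n+1}=\{(z_{n+1},z_{n+2},\dots):(y_n,z_{n+1},z_{n+2},\dots)\in K_n\}
\end{displaymath}
is the section of $K_n$ at $y_n$ under the identification $(2^\omega)^\omega\cong 2^\omega\times (2^\omega)^\omega$. At step $n$, Fubini gives that $E_n=\{y\in 2^\omega:\lambda_\omega((K_n)_y)>0\}$ is measurable with $\lambda(E_n)>0$. The hypothesis $\lambda^*(Y_n)=1$ then forces $E_n\cap Y_n\ne\empt$: otherwise $Y_n$ would lie in the measurable set $2^\omega\setm E_n$ of measure strictly less than $1$, contradicting $\lambda^*(Y_n)=1$. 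Pick any $y_n\in E_n\cap Y_n$ and set $K_{n+1}=(K_n)_{y_n}$; this is compact (being a closed subset of the compact space $(2^\omega)^\omega$) and of positive $\lambda_\omega$-measure, so the induction continues.

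Finally, to turn these nested positive-measure sections into a single point of $K$, I would use compactness. Put $C_n=\{z\in K:z_i=y_i\text{ for all }i<n\}$. Each $C_n$ is closed in $K$, and $C_n\ne\empt$ because $K_n\ne\empt$ supplies a tail that completes $(y_0,\dots,y_{n-1})$ to a point of $K$. The $C_n$ form a decreasing chain of non-empty closed subsets of the compact set $K$, so by the finite intersection property $\bigcap_n C_n\ne\empt$; any point in this intersection is of the form $(y_0,y_1,\dots)$ and lies in $K\cap \prod_n Y_n$, contradicting $K\cap \prod_n Y_n=\empt$.

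I expect the main obstacle to be precisely this last step: positive outer measure of each $Y_n$ by itself does not permit one to extract a specific point of $W$ with every coordinate in the corresponding $Y_n$, because $\prod_n Y_n$ is in general non-measurable and ``positive measure of sections'' is strictly weaker than ``nonempty specific fibre''. Passing to a compact subset $K$ and invoking the finite intersection property on the closed sets $C_n$ is what bridges this gap, which is exactly where Radonness of $\lambda_\omega$ enters the argument rather than merely Fubini.
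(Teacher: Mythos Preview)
Your argument is correct and is essentially identical to the paper's own proof: both pass to a compact subset of positive measure disjoint from $\prod_n Y_n$, use Fubini together with $\lambda^*(Y_n)=1$ to inductively choose $y_n\in Y_n$ keeping the sections of positive measure, and then invoke compactness to conclude that $(y_0,y_1,\dots)$ lies in the compact set, yielding the contradiction. The only cosmetic difference is that the paper phrases the final step as ``for each $n$ there is a tail extending $(y_0,\dots,y_{n-1})$ into $Z$, hence by closedness $y\in Z$'', whereas you spell this out via the finite intersection property on the closed sets $C_n$.
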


\begin{proof}
Write $Y^*=\prod Y_n$.

Assume on the contrary that there is $Z\subs (2^\omega)^\omega\setm Y^*$
with $\lambda(Z)>0$. Since the measure $\lambda_\omega$ is regular, we can assume that 
$Z$ is compact. 
By induction we pick elements
$y_0\in Y_0,\dots, y_n\in Y_n, \dots$ such that
$\lambda(Z_n)>0$, where
\begin{displaymath}
Z_n=\{z\in (2^\omega)^\omega: \<z_0,\dots, z_{n-1}\>^\frown z\in Z \}. 
\end{displaymath}
Especially $Z_0=Z$.

If $Z_n$ is defined let
\begin{displaymath}
T_n=\{t\in 2^\omega:  \lambda(\{z:\<t\>^\frown z\in Z_n  \})>0 \} 
\end{displaymath}
By Fubini theorem, $\lambda(T_n)>0$, so we can pick $y_n\in T_n\cap Y_n$.

Let $y=\<y_n:n<\omega\>\in \prod Y_n$. Then for each $n\in \omega$ there is $z$ such that 
$y\restriction n^\frown z\in Z$, and so $y\in Z$ because $Z$ is compact. 
\end{proof}

We are ready to conclude the proof of Theorem \ref{lm:merge}.

Enumerate first $A$ as $\{{\kappa}_n:n<{\omega}\}$.
For each $n<{\omega}$ apply lemma \ref{lm:eto1} to get 
$X_n\subs\Int $ such that ${\lambda}^*(X_n)=1$ and 
${\kappa}_n\in \nad(\mc{N},{X_n}$). Let $X^*=\prod_{n\in {\omega}} X_n\subs 
(\Int)^{\omega}$. 
Then ${\lambda}^*(X^*)=1$ and 
$A=\{{\kappa}_n:n<{\omega}\}\subs\nad(\mc{N}_{\omega}, X^*)$.
Thus $\pcf(A)\subs \nad(\mc{N}_{\omega}, X^*)\subs \nad(\mc{N}_\omega)$
by Theorem \ref{tm:gen_pos}.
However, $\nad(\mc{N}_\omega)=\nad(\mc{N})$ by (\ref{eq:add})
from Fact \ref{fa:pr}, so we are done. 
\end{proof}


\begin{corollary}\label{corr:count}
Let $\mc I$ be either the ideal $\mc B$ or the ideal $\mc N$. 
Assume that $A$ is a  non-empty set $A$ of uncountable regular cardinals.
If  $A$ is countable, or $\max A\le\ cf(\br \aleph_\omega;\omega;,\subs)$ then 
the following statements are equivalent:
\begin{enumerate}[(1)]
\item $A=\nad(\mc I)$ in some c.c.c extension of the ground model,
\item $A=\pcf(A)$.
\end{enumerate}
\end{corollary}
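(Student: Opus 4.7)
The plan is to derive the corollary by combining Theorem~\ref{tm:spec_sharp_hechler} (for the direction $(2) \Rightarrow (1)$) with the closure results of Section~\ref{sc:gen_pos}, namely Theorems~\ref{tm:n_closed} and~\ref{tm:b_closed} (for $(1) \Rightarrow (2)$). Neither direction requires new ideas beyond what is already in the paper; the work is in checking that the hypotheses of those theorems are met in each of the two listed subcases.

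For $(2) \Rightarrow (1)$, given a non-empty $A = \pcf(A)$ of uncountable regular cardinals, I would apply Theorem~\ref{tm:spec_sharp_hechler}, which requires $|A| < \min(A)^{+n}$ for some $n \in \omega$. When $A$ is countable this holds with $n = 1$, since $\min(A) \ge \aleph_1$. When $\max A \le \cf(\br \aleph_\omega;\omega;, \subs)$, I would instead apply the ``especially'' clause of Theorem~\ref{tm:spec_sharp_hechler}: using the identity $\cf(\br \aleph_\omega;\omega;, \subs) = \max \pcf(\{\aleph_n : 1 \le n < \omega\})$ together with $A = \pcf(A)$, one expresses $A$ as $\pcf(Y)$ for an appropriate $Y \subseteq \pcf(\{\aleph_n : 1 \le n < \omega\})$, and Shelah's bound $|\pcf(\{\aleph_n : 1 \le n < \omega\})| < \aleph_4$ then ensures that the cardinality estimate holds (with $n = 3$).

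For $(1) \Rightarrow (2)$, the inclusion $A \subseteq \pcf(A)$ is automatic via principal ultrafilters. For the reverse inclusion, use that $A = \nad(\mc{I})$ by (1): when $A$ is countable, Theorem~\ref{tm:n_closed} (if $\mc{I} = \mc{N}$) or Theorem~\ref{tm:b_closed} (if $\mc{I} = \mc{B}$) yields $\pcf(A) \subseteq \nad(\mc{I}) = A$ immediately, noting that a countable set of uncountable regular cardinals is automatically progressive with $|A| < \aleph_1 \le \mathfrak{h}$. For uncountable $A$ with $\max A \le \cf(\br \aleph_\omega;\omega;, \subs)$, I would reduce each $\kappa \in \pcf(A)$ to a countable subset $A_0 \subseteq A$ with $\kappa \in \pcf(A_0)$ and then invoke the countable closure on $A_0$. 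The main obstacle is making these pcf-reductions precise in the uncountable subcase — both the representation $A = \pcf(Y)$ with $Y \subseteq \pcf(\{\aleph_n : 1 \le n < \omega\})$ needed for $(2) \Rightarrow (1)$, and the countable localization of each element of $\pcf(A)$ needed for $(1) \Rightarrow (2)$ — but both are standard in pcf theory in the range $\max A \le \cf(\br \aleph_\omega;\omega;, \subs)$.
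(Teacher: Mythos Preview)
Your proposal is correct and follows essentially the same approach as the paper: for $(2)\Rightarrow(1)$ you verify the cardinality hypothesis of Theorem~\ref{tm:spec_sharp_hechler} in each subcase (the paper phrases the second subcase as $A\subseteq\pcf(\{\aleph_n:1\le n<\omega\})$ and applies Shelah's bound directly rather than invoking the ``especially'' clause, but this is the same argument), and for $(1)\Rightarrow(2)$ you use Theorems~\ref{tm:b_closed} and~\ref{tm:n_closed} on countable subsets together with a localization step in the uncountable subcase. The paper makes that localization step explicit by citing the Localization Theorem \cite[Theorem 6.6]{AM}, which is exactly the ``standard pcf reduction'' you allude to.
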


\begin{proof}
(2) $\Longrightarrow $ (1):  if $A$ is countable then $A$ is progressive. 

If $\sup(A)\le \cf(\br \aleph_\omega;\omega;,\subs)$,
 then we have $A\subs \pcf(\aleph_n:1\le n<\omega)$, and so   $|A|<\omega_4\le \min(A)^{+4}$ by the celebrated theorem of Shelah
\cite{Sh}.
 
So in both case we can apply Theorem \ref{tm:sharp_hechler} to get (1).

\medskip
\noindent
(1) $\Longrightarrow $ (2): By Theorems \ref{tm:b_closed} and \ref{tm:n_closed} we have that
\begin{displaymath}\tag{$\star$}\label{star}
A=\cup\{\pcf(A'):A'\in \br A;\omega;\}. 
\end{displaymath}
If $A$ is countable, (\ref{star}) gives immediately $A=\pcf(A)$.

If $\sup(A)\le \cf(\br \aleph_\omega;\omega;,\subs)$,
 then  $A\subs \pcf(\aleph_n:1\le n<\omega)$, 
so by the Localization Theorem (see \cite[Theorem 6.6.]{AM}) we have 
$\pcf(A)=\cup\{\pcf(A'):A'\in \br A;\omega;\}$. Thus even in this case,
(\ref{star}) gives $A=\pcf(A)$.
\end{proof}


Finally we mention an open question.
We could not prove that if $A\subs \nad(\mc M)$ is countable then $\pcf(A)\subs \nad(\mc M)$
because the following question is open:
\begin{problem}
Is it true that if $A\subs \nad( {\mc{M}})$ is countable then  $A\subs \nad({\mc{M},Y})$
for some $Y\notin \mc M$?
\end{problem}


\end{document}